\documentclass[11pt]{article}
\usepackage[T1]{fontenc}
\usepackage[utf8]{inputenc}
\usepackage[margin=1.08in]{geometry}
\usepackage{amsmath,amssymb,amsthm,mathtools}
\usepackage{enumitem}
\usepackage{xcolor}
\usepackage[colorlinks=true,linkcolor=blue,citecolor=blue,urlcolor=blue]{hyperref}
\usepackage[nameinlink,noabbrev,capitalize]{cleveref}
\usepackage[symbol]{footmisc}
\numberwithin{equation}{section}
\newtheorem{theorem}{Theorem}[section]
\newtheorem{proposition}[theorem]{Proposition}
\newtheorem{lemma}[theorem]{Lemma}

\theoremstyle{remark}

\newcommand{\Z}{\mathbb Z}
\newcommand{\td}{\tilde{d}}
\newcommand{\te}{\tilde{e}}

\title{On zero-sum Ramsey numbers of complete bipartite graphs}
\author{Cheng Chi$^\ast$ 
\and Jialin He$^{\dagger,\S}$}
\date{}

\begin{document}
\maketitle
\footnotetext[1]{School of Mathematical Sciences, Shanghai Jiao Tong University, 800 Dongchuan Road, Shanghai 200240, China.
    Email: chengchi@sjtu.edu.cn. Supported by National Key R\&D Program of China under grant No. 2022YFA1006400 and National Natural Science Foundation of China No. 12571376.}

\footnotetext[2]{Corresponding Author.}

\footnotetext[4]{School of Mathematical Sciences,  Key Laboratory of MEA (Ministry of Education) and Shanghai Key Laboratory of PMMP,  East China Normal University, Shanghai 200241, China.
    Email: jlhe@math.ecnu.edu.cn. Supported in part by Science and Technology Commission of Shanghai Municipality No. 22DZ2229014.}

\begin{abstract}
    For an integer $q\ge 2$ and a graph $F$ satisfying $q\mid e(F)$, the zero-sum Ramsey number $R(F,\Z_q)$ is the least integer $n$ such that every edge-labeling $w\colon E(K_n)\to \Z_q$ contains a copy of $F$ whose edge-label sum is zero in $\Z_q$.
    Write $K_{s,t}$ for the complete bipartite graph with $s$ vertices on one side and $t$ vertices on the other side.

    We prove that for every integer $q\ge2$, there is an explicit threshold $S(q)$ such that $R(K_{s,qk},\Z_q)=s+qk$ for all $s\ge S(q)$ and all $k\ge1$.
    We also determine the zero-sum Ramsey number of $K_{s,3k}$ over $\Z_3$ completely. 
    We show that $R(K_{s,3k},\Z_3)=s+3k$, except when $s=2$ and $k\ge1$, or when $s\in\{3,4,5,7\}$ and $k=1$.
    In these exceptional cases, $R(K_{s,3k},\Z_3)=s+3k+1$.
    In particular, this implies that the threshold $S(q)$ is best possible for \(q=3\).
\end{abstract}

\section{Introduction}

Zero-sum Ramsey theory is a group-labelled analogue of classical Ramsey theory.
Instead of seeking a monochromatic copy of a prescribed graph, one labels the edges of a complete graph by elements of a finite abelian group and asks for a copy whose total edge-label is the identity.
This area was initiated by Bialostocki and Dierker~\cite{BD1990,BD1992}, motivated in part by the Erd\H{o}s--Ginzburg--Ziv theorem~\cite{EGZ1961} and the Cauchy--Davenport theorem~\cite{Cauchy1813,Davenport1935}; see Caro's survey~\cite{Caro1996} for background.

Throughout the paper we work with the cyclic group \(\Z_q\), where \(q\ge2\).
For a graph \(G\), the \emph{zero-sum Ramsey number} \(R(G,\Z_q)\) is the least integer \(n\) such that every edge-labeling $w\colon E(K_n)\to \Z_q$ contains a copy \(G'\) of \(G\) with $\sum_{e\in E(G')}w(e)=0$ in $\Z_q$.
The divisibility condition $q\mid e(G)$ is necessary, since the constant labeling $w(e)=1$ gives a nonzero sum on every copy of $G$.
Conversely, if \(q\mid e(G)\), then \(R(G,\Z_q)\) is finite by the ordinary multicolor Ramsey theorem~\cite{Ramsey1930}, because any monochromatic copy of \(G\) is zero-sum. 
A related bipartite-host version was studied earlier by Caro~\cite{Caro1993Bipartite}. In that setting, one colors the edges of a complete bipartite host graph and asks for a zero-sum copy of a prescribed bipartite graph. This differs from the present parameter, where the host graph is complete and the target graph is \(K_{s,t}\).

Determining exact values of \(R(G,\Z_q)\) is difficult beyond a few special cases.
For $q=2$, Caro gave a complete
characterization of \(R(G,\Z_2)\) for every graph \(G\) with an even number of edges~\cite{Caro1994}.
For \(q\ge3\), no comparable general theory is known.
Even for complete graphs, substantial work of Alon and Caro~\cite{AlonCaro1993} , Caro~\cite{Caro1992Complete,Caro1997}, and Harborth and Piepmeyer~\cite{HarborthPiepmeyer1994,HarborthPiepmeyer1996} leaves some cases open, including residue classes over \(\Z_3\); see~\cite{CaroMifsud2025}.
Other exact results are known for stars and matchings~\cite{BD1992,Caro1992Stars}. 
Trees and forests have also received considerable attention.
Caro and Roditty posed a conjecture concerning the zero-sum Ramsey number of trees~\cite{CaroRoditty1995}.
More recently, Caro and Mifsud initiated a systematic study over \(\Z_3\), proving sharp \(n+O(1)\)-type bounds for forests~\cite{CaroMifsud2025}.
Subsequently, Alvarado, Colucci and Parente determined the exact \(\Z_3\)-Ramsey numbers of forests without isolated vertices~\cite{AlvaradoColucciParente2025}.

For graphs containing cycles, much less is known.
Cycles are the most basic family in this direction.
Related zero-sum cycle-existence results have been obtained for group-labelled complete digraphs~\cite{LetzterMorrison2024}.
For ordinary cycles of prescribed length, several ranges of zero-sum Ramsey numbers are known, but the general picture remains incomplete.
For example, Caro~\cite{Caro1992Complete,Caro1996} conjectured that \(R(C_q,\Z_q)=4q-3\) for every odd \(q\ge5\), a problem that remains open in general.
See~\cite{ChiHeCyclesWheels2026,HeathSimmons2026} for recent progress on cycles and wheels.

Recent general results show that zero-sum Ramsey numbers are often close to the vertex-count lower bound for sparse or locally sparse graphs. Katz, Lian, Malekshahian and Shapiro proved linear bounds for bounded-degree graphs over finite abelian groups~\cite{KatzLianMalekshahianShapiro2025}; for prime moduli, sharper \(n+O_p(1)\)-type bounds have been obtained for forests~\cite{ColucciDEmidio2025}, degenerate graphs under suitable hypotheses~\cite{Shapiro2026}, and graphs with large \(2\)-packings~\cite{HeathSimmons2026}. Complete bipartite graphs \(K_{s,t}\), however, fall outside these frameworks when both parts grow: they have unbounded maximum degree and degeneracy, diameter two, and no large \(2\)-packing. Their natural lower bound is simply
\(R(K_{s,t},\mathbb Z_q)\ge s+t\).

Our first contribution is to show that for every integer $q\ge2$, this trivial lower bound is sharp for complete bipartite graphs, once \(s\) is sufficiently large as a function of \(q\).  
For an integer $q\ge2$, let $g=\gcd(q,2)$, $d=q/g$ and $M(q)=R(K_q,\Z_d)$.
When \(d=1\), we regard \(\Z_1\) as the trivial group and set \(R(K_m,\Z_1)=m\).
Let $S(q)=gM(q)-q-g+1$.

\begin{theorem}\label{thm:eventual-general}
    Let $q\ge2$ be an integer.
    For every $k\ge1$ and every $s\ge S(q)$, we have
    \[
        R(K_{s,qk},\Z_q)=s+qk.
    \]
\end{theorem}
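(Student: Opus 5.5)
The plan is to work directly with the cut function. Set $n=s+qk$ and fix $w\colon E(K_n)\to\Z_q$. A copy of $K_{s,qk}$ in $K_n$ is determined by the set $B$ of $qk$ vertices forming the larger side. Its weight is
\[
f(B)=\sum_{v\in B} d(v)-2e(B),
\]
where $d(v)=\sum_{u\ne v}w(uv)$ and $e(B)=\sum_{uv\subseteq B}w(uv)$. The lower bound $R(K_{s,qk},\Z_q)\ge s+qk$ is trivial, since $K_n$ with $n<s+qk$ contains no copy at all. So the task is to find a $qk$-set $B$ with $f(B)=0$.

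The key observation is that $2e(B)\bmod q$ depends only on $e(B)\bmod d$, because $2\Z_q\cong\Z_d$ with $d=q/g$. This is why $M(q)=R(K_q,\Z_d)$ enters.

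\textbf{Reduction to a single block.} Fix an arbitrary set $B'$ of $q(k-1)$ vertices and let $U$ be the remaining $s+q$ vertices. The hypothesis $s\ge S(q)$ says exactly
\[
|U|\ge g(M(q)-1)+1 .
\]
For $Q\subseteq U$ with $|Q|=q$ we expand
\[
f(B'\cup Q)=f(B')+\sum_{v\in Q}c(v)-2e(Q),\qquad c(v)=d(v)-2w(v,B').
\]
So we need a $q$-set $Q\subseteq U$ with
\[
\sum_{v\in Q}c(v)-2e(Q)=-f(B').
\]
The quantity $|U|\ge g(M-1)+1$ suggests the following two steps.
\begin{enumerate}[label=(\roman*)]
\item Use pigeonhole on $c(v)\bmod g$ to obtain $M$ vertices whose $c$-values agree mod $g$.
\item Among these $M$ vertices, apply $R(K_q,\Z_d)$ to an auxiliary edge labelling. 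I would try $w'(uv)=w(uv)-\lambda(c(u)+c(v))$ for a suitable $\lambda$, so that the vertex term is folded into the edge term modulo $d$. Then a zero-sum $K_q$ for $w'$ would give $2e(Q)$ in a prescribed class.
\end{enumerate}

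\textbf{Main obstacle.} The hard part is handling the linear term $\sum_{v\in Q} c(v)$ and the constant $f(B')$ simultaneously. The identity $\sum_{uv\subseteq Q}(c(u)+c(v))=(q-1)\sum_{v\in Q}c(v)$ has coefficient $q-1\equiv-1\pmod q$, which is invertible. So taking $\lambda=-1$, folding the vertex labels into edges is plausible modulo $q$. Modulo $d$, however, one only controls the factor $2$, and this is where the congruence of the $c(v)$ modulo $g$ from step (i) should be used.

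\textbf{Hitting the constant.} The constant $f(B')$ is generally nonzero. Here I would exploit the freedom in choosing $B'$, for instance by building $B$ greedily block by block, each time choosing the next block $Q$ zero-sum relative to what has already been fixed. This way the target constant is always $0$ rather than $-f(B')$. Concretely, I would induct on $k$. At each step, choose the next block $Q$ inside the remaining pool of at least $s+q\ge g(M-1)+1$ vertices, so that the total cut sum becomes $0$. The computation above must then be redone with $f$ replaced by the incremental change of the cut weight when $Q$ is moved across. Verifying that this increment has exactly the form ``vertex term plus $-2e(Q)$'' is the step I expect to require the most care.
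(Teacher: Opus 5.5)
Your architecture matches the paper's. Your identity $f(B'\cup Q)=f(B')+\sum_{v\in Q}c(v)-2e(Q)$ holds for every $B'$, and your $c(v)$ is exactly the $b_x$ of Proposition~\ref{prop:cut-increment}. So the step you expect to need the most care is already done. Inserting $q$-blocks greedily while keeping the cut sum $0$ is precisely how the paper deduces the theorem from Lemma~\ref{lem:block-insertion}.

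The genuine gap is the one step that needs an idea: the auxiliary labeling. The form you propose fails.

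\textbf{The choice $\lambda=-1$ is wrong.} Summing $w(uv)+c(u)+c(v)$ over $K_Q$ gives $e(Q)-\sum_Q c$. A zero-sum $K_q$ therefore yields $e(Q)=\sum_Q c$, so the increment is $-\sum_Q c$, not $0$. For odd $q$ you need $\lambda=-2^{-1}$.

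\textbf{For $4\mid q$, no $\lambda$ works.} Because $M(q)=R(K_q,\Z_d)$, the labeling must take values in $\Z_d$. Consider any labeling of the form $\overline{w}(uv)-\lambda(\overline{c}(u)+\overline{c}(v))$, with bars denoting reduction modulo $d$. Write $c(v)=\epsilon+2a_v$ on your pigeonhole class. Such a labeling only sees $\sum_Q\overline{c}(v)\equiv 2\sum_Q a_v\pmod d$. But the target $\sum_Q c-2e(Q)=0$ in $\Z_q$ is equivalent to $\sum_Q a_v\equiv e(Q)\pmod d$, and $2$ is not invertible modulo the even number $d$. For instance, when $q=4$ the vertex part of $w'$ vanishes modulo $2$ on the class. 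A zero-sum $K_4$ for $w'$ then certifies only $2e(Q)\equiv 0\pmod 4$ and says nothing about $\sum_Q c\pmod 4$.

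\textbf{The missing idea} is to fold the quotients $a_v$, not the values $c(v)$, into the edges. On the class write $c(v)=\epsilon+g a_v$ with $a_v\in\Z_d$. Because $|Q|=q$, the constant disappears: $\sum_Q c(v)=q\epsilon+g\sum_Q a_v=g\sum_Q a_v$ in $\Z_q$. Since also $2e(Q)=g\cdot\tfrac{2}{g}e(Q)$, the target becomes $g\bigl(\sum_Q a_v-\tfrac{2}{g}e(Q)\bigr)=0$ in $\Z_q$. This is equivalent to $\sum_Q a_v-\tfrac{2}{g}\overline{e}(Q)=0$ in $\Z_d$.

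Now set $\psi(xy)=-\tfrac{2}{g}\overline{w}(xy)+(q-1)^{-1}(a_x+a_y)$ in $\Z_d$. Here $q-1\equiv -1$ is invertible modulo $d$; take $\psi\equiv 0$ if $d=1$. Then $\sum_{\{x,y\}\subseteq Q}\psi(xy)=\sum_Q a_v-\tfrac{2}{g}\overline{e}(Q)$. A zero-sum $K_q$ for $\psi$ exists because the class has at least $M(q)$ vertices, and it gives a zero increment.

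This is the content of Lemma~\ref{lem:block-insertion}. Without it your proposal does not prove the theorem: it is incomplete for even $q$, and for odd $q$ it needs the corrected $\lambda$.
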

Theorem~\ref{thm:eventual-general} may be compared with the following
elementary upper bound due to Erd\H{o}s--Ginzburg--Ziv theorem~\cite{EGZ1961}. For all integers \(q\ge2\), \(s\ge1\), and \(k\ge1\),
\[
    R(K_{s,qk},\Z_q)\le s+qk+q-1 .
\]
Indeed, let \(N=s+qk+q-1\), and let
\(w\colon E(K_N)\to\Z_q\) be an arbitrary edge-labeling. Fix an \(s\)-set
\(A=\{u_1,\ldots,u_s\}\subseteq V(K_N)\), and let
\(C=V(K_N)\setminus A\). Then \(|C|=qk+q-1\). For each \(v\in C\), define
\[
    a_v=\sum_{u\in A} w(uv)\in \Z_q .
\]
By the Erd\H{o}s--Ginzburg--Ziv theorem, the sequence
\((a_v)_{v\in C}\) contains a subsequence indexed by a set
\(B\subseteq C\) with \(|B|=qk\) and $\sum_{v\in B} a_v=0$ in $\Z_q$.  
Hence, $\sum_{v\in B}\sum_{u\in A} w(uv)=0$ in $\Z_q$. 
Thus the complete bipartite subgraph with parts \(A\) and \(B\) is a zero-sum copy of \(K_{s,qk}\). This gives a general upper bound which is only \(q-1\)
above the vertex-count lower bound \(s+qk\). Theorem~\ref{thm:eventual-general}
shows that, once \(s\ge S(q)\), these extra \(q-1\) vertices are unnecessary.

Particularly, we study the case $q=3$.
Our second contribution is a complete determination of $R(K_{s,3k},\Z_3)$, including all exceptional small cases.
The proof uses a lifting method different from the one used to prove \Cref{thm:eventual-general}.

\begin{theorem}\label{thm:z3-exact}\label{thm:main}
    For all integers $s\ge2$ and $k\ge1$,
    \[
        R(K_{s,3k},\Z_3)=
        \begin{cases}
            s+3k+1, & s=2,\ k\ge1,\ \text{or}\ s\in\{3,4,5,7\},\ k=1, \\
            s+3k,   & \text{otherwise}.
        \end{cases}
    \]
\end{theorem}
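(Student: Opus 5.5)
The plan has three parts: lower-bound constructions for the exceptional cases, the general bound $R(K_{s,3k},\Z_3)=s+3k$ for all other $(s,k)$, and the upper bound $s+3k+1$ in the exceptional cases.

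\textbf{Lower bounds.} In the non-exceptional cases the bound $\ge s+3k$ is the trivial vertex count. For the exceptional cases I need a labeling of $K_{s+3k}$ with no zero-sum $K_{s,3k}$. With $n=s+3k$ vertices, a copy is just an ordered partition $V=A\cup B$ with $|A|=s$ and $|B|=3k$. Its sum is $w(E)-w(A)-w(B)$, where $w(X)$ denotes the label sum inside $X$. For $s=2$ the sum is $w(E)-w(uv)-w(V\setminus\{u,v\})$. Writing $d(x)$ for the weighted degree, $w(V\setminus\{u,v\})=w(E)-d(u)-d(v)+w(uv)$, so the sum equals $d(u)+d(v)-2w(uv)=d(u)+d(v)+w(uv)$ in $\Z_3$. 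I would look for a labeling making $d(u)+d(v)+w(uv)\ne0$ for every pair, for instance a single star or a clique on a suitable residue class, and check it for all $k$. For $s\in\{3,4,5,7\}$ with $k=1$ there are only $n\le 10$ vertices, so I would give explicit labelings, for example a labeling $w(uv)=f(u)+f(v)$ plus a small perturbation, and verify them by the same degree identity.

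\textbf{Upper bound $s+3k$.} \Cref{thm:eventual-general} with $q=3$ gives $g=1$, $d=3$, $M(3)=R(K_3,\Z_3)$. Since $R(K_3,\Z_3)$ is small (it should be $11$), this yields the result for $s\ge S(3)=M(3)-3$. The abstract asserts this threshold is sharp for $q=3$, so $S(3)=8$ and $s=7$ is exceptional. It remains to handle $3\le s\le 7$ for all $k$ (excluding the exceptional $k=1$ cases) and $s=6$ with $k=1$.

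The announced method is a lifting in $k$. Suppose every labeling of $K_{s+3k}$ has a zero-sum $K_{s,3k}$. Given $K_{s+3(k+1)}$, I would find three vertices $T$ and a set $A$ of size $s$ such that $\sum_{u\in A,\,t\in T}w(ut)=0$. Then I would apply induction inside $V\setminus T$, forcing the $s$-side to be that same $A$. This needs a strengthened hypothesis, something like "for every labeling and every admissible choice there is a zero-sum copy with prescribed flexibility". An alternative is to use the vectors $a_v=\sum_{u\in A}w(uv)$ as in the EGZ argument: it suffices to find $A$ such that the multiset $\{a_v:v\notin A\}$ of size $3k$ sums to $0$. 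That is, I need $A$ with $\sum_{v\notin A}a_v=w(A,V\setminus A)=0$. So the whole problem reduces to finding an $s$-set $A$ whose cut has weight $0$ mod $3$, and $w(A,\bar A)=\sum_{u\in A}d(u)-2w(A)$. I would argue by swapping single vertices: exchanging $u\in A$ with $v\notin A$ changes the cut by an explicit amount. If no cut is zero, all cut values lie in $\{1,2\}$ and are invariant or forced under swaps, which imposes strong structure (e.g. $d(u)-d(v)$ and $w(ux)-w(vx)$ constant patterns). That structure should collapse the labeling to nearly $w(xy)=f(x)+f(y)+c$, for which the cut weight can be computed directly. Counting residues of $f$ then yields a zero cut unless $n$ is small, matching exactly the exceptional list.

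\textbf{Exceptional upper bounds} ($\le s+3k+1$) follow from the same structural analysis with one extra vertex. The main obstacle is the structure step: showing that a labeling with no zero cut among $s$-sets must have the rigid form $f(x)+f(y)+c$ (or a close variant), and then doing the residue count carefully enough to recover precisely $s\in\{3,4,5,7\}$ at $k=1$ and $s=2$ for all $k$. I would first try the swap analysis on two-vertex exchanges to pin down $w$ on 4-cycles, deducing that $w(xy)+w(zt)-w(xt)-w(zy)$ is constant.
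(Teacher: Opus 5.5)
Your proposal establishes only three parts of the theorem: the vertex-count lower bound, the observation that on $s+3k$ vertices a copy of $K_{s,3k}$ is just an $s$-set $A$ with $\partial(A)=0$, and the range $s\ge 8$ via Theorem~\ref{thm:eventual-general} with $M(3)=11$. The remaining upper bounds are $3k+3$ for $s=2$; $s+3k$ for $3\le s\le 7$, $k\ge 2$, and for $(s,k)=(6,1)$; and $s+4$ in the exceptional cases. All of these rest on the swap heuristic and on the claim that a labeling with no zero $s$-cut must be (close to) $w(xy)=f(x)+f(y)+c$.

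That claim is false as stated. For such a labeling and $|B|=3k$ one gets $w(A,B)=s\sum_{b\in B}f(b)$. This vanishes identically if $3\mid s$. If $3\nmid s$, it vanishes for some $3k$-subset of the $s+3k\ge 3k+2$ vertices by Erd\H{o}s--Ginzburg--Ziv. So your structure statement would force $R(K_{s,3k},\Z_3)=s+3k$ in every case, contradicting the very exceptions you want to prove. Concretely, the $K_6$ labeling of Subsection~\ref{subsec:small-lower} has no zero $3$-cut and is not of this form; there the edges at one vertex are labelled $0$ and the remaining $K_5$ is split into a $1$-labelled and a $2$-labelled $5$-cycle. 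Nor do swaps give rigidity: knowing that all cut values lie in $\{1,2\}$ only forbids one value of each swap increment.

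The exceptional lower bounds are not supplied either.
\begin{itemize}
\item By the computation just given, ``$f(u)+f(v)$ plus a small perturbation'' leaves all the work to the unspecified perturbation.
\item Your $s=2$ candidates fail. For a spanning labelled star centred at $c$, either two leaves have labels summing to $0$, or some leaf $x$ has $w(cx)=\td(c)$ and then $\td(c)+\td(x)+w(cx)=0$. A constant-labelled clique fails similarly. A Hamilton cycle labelled $1$ does work.
\item The $s=7$ exception cannot be read off from the claimed sharpness of $S(3)$, since that sharpness is a consequence of this theorem. The $K_{10}$ construction must be given explicitly.
\end{itemize}

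The missing ideas are exactly what the paper's proof is built from.
\begin{itemize}
\item \textbf{Side-lift.} This is Lemma~\ref{lem:side-lift}. If $3\nmid s$, fix a $t$-set $Z$, set $h(x)=\sum_{z\in Z}w(zx)$, and apply the $s$-result to $w'(xy)=w(xy)+s\bigl(h(x)+h(y)\bigr)$. This gives $R(K_{s+t,3k},\Z_3)\le R(K_{s,3k},\Z_3)+t$. It turns $R(K_{2,3},\Z_3)=6$ into all exceptional upper bounds $s+4$, and $R(K_{4,3k},\Z_3)=3k+4$ into all cases $s\ge 5$, $k\ge 2$.
\item \textbf{Triple deletion.} For any $S$ with $|S|\ge 11$, apply Lemma~\ref{lem:cg} to $g(xy)=w(xy)+2b_x+2b_y$, with $b_x$ as in Proposition~\ref{prop:cut-increment} and $U=V\setminus S$. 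This gives a triple $T\subseteq S$ with $\partial(S\setminus T)=\partial(S)$. It is the lifting in $k$ you were looking for, and it needs no strengthened induction hypothesis. It settles $s=3$, $k\ge 3$ directly, via $a(xy)=w(xy)-\td(x)-\td(y)$, whose triangle sums are exactly the $3$-cut sums. It also reduces $s=4$, $k\ge 2$ to a $10$-vertex problem.
\item \textbf{Base cases.} These are the genuinely hard part. $R(K_{2,3k},\Z_3)=3k+3$ is proved by a long structural case analysis of the data $r_y,b_y,\delta_{yz}$. $R(K_{3,6},\Z_3)=9$ and $R(K_{4,6},\Z_3)=10$ are proved by a Chevalley--Warning-type signed count of zero cuts (Lemma~\ref{lem:cut-count}), together with a second-moment identity for $4$-cuts.
\end{itemize}
Without these, or real substitutes for them, the plan does not prove the theorem.
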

For \(q=3\), the EGZ bound above gives
\(R(K_{s,3k},\mathbb Z_3)\le s+3k+2\). Hence
Theorem~\ref{thm:z3-exact} improves this general bound by two vertices in all non-exceptional cases and by one vertex in the exceptional cases.
Moreover, the threshold $S(q)$ in Theorem~\ref{thm:eventual-general} is best possible for $q=3$.
Indeed, Lemma~\ref{lem:cg} gives \(M(3)=R(K_3,\Z_3)=11\), and hence \(S(3)=8\).
Moreover, \Cref{thm:main} gives \(R(K_{7,3},\Z_3)=11>10=7+3\), while \(R(K_{s,3k},\Z_3)=s+3k\) for all \(s\ge8\) and \(k\ge1\).
Thus the condition \(s\ge S(3)\) cannot be improved.

The rest of the paper is organized as follows.
\Cref{sec:defs-lemmas} contains the notation and some general auxiliary lemmas.
\Cref{sec:eventual-proof} proves \Cref{thm:eventual-general}.
The proof of \Cref{thm:z3-exact} begins in \Cref{sec:main-proof} and continues with the proofs of the base lemmas in \Cref{Sec:pf lemmas}.

\section{Definitions and lemmas}\label{sec:defs-lemmas}

Throughout the paper, all edge-label sums are taken in the relevant cyclic group.
For a subgraph $H$ of a labeled complete graph, write $w(H)=\sum_{e\in E(H)}w(e)$.
The subgraph $H$ is \emph{zero-sum} if $w(H)=0$.
If $A$ and $B$ are disjoint vertex sets, define
\[
    w(A,B)=\sum_{a\in A}\sum_{b\in B}w(ab).
\]
Thus a copy of $K_{s,m}$ with sides $A$ and $B$ is zero-sum if $|A|=s$, $|B|=m$, and $w(A,B)=0$.

Let $K$ be a complete graph on vertex set $V$ with edge-labeling $w$.
For a vertex $x\in V$, define its \emph{weighted degree} by $\td_w(x)\coloneqq\sum_{y\ne x}w(xy)$.
For $X\subseteq V$, define
\[
    \te_w(X)\coloneqq\sum_{\{x,y\}\subseteq X}w(xy)\text{ and }\partial_w(X)\coloneqq w(X,V\setminus X).
\]
When the context is clear, we write $\td(x)$, $\te(X)$ and $\partial(X)$.

For the proof of \Cref{thm:z3-exact}, we shall use the following result on edge-colored complete graphs proved by Chung and Graham~\cite{ChungGraham1983}.

\begin{lemma}[Chung and Graham, Theorem 1 in~\cite{ChungGraham1983}]\label{lem:cg}
    We have \(R(K_3,\Z_3)=11\). Equivalently, every edge-coloring of \(K_{11}\) with three colors contains a triangle which is not exactly two-colored, while there exists a three-coloring of \(K_{10}\) in which every triangle is exactly two-colored.
\end{lemma}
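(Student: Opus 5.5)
This lemma is quoted from Chung and Graham, so the paper only needs to explain the stated equivalence and cite \cite{ChungGraham1983} for the numerical value. If one wanted to reprove it, the plan has two parts: translate the zero-sum condition for $K_3$ over $\Z_3$ into a colouring condition, and then settle the resulting Ramsey-type problem by a lower-bound construction on $10$ vertices and an upper-bound argument on $11$ vertices.

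The translation comes first. For three labels $a,b,c\in\Z_3$, the sum $a+b+c$ is $0$ exactly when all three labels are equal or all three are distinct. If exactly two are equal, say $a,a,b$ with $b\ne a$, then the sum is $2a+b\equiv b-a\ne0$. So a triangle is zero-sum precisely when it is \emph{not} exactly two-coloured. Hence $R(K_3,\Z_3)=11$ is equivalent to the following two facts, and we prove them in turn:
\begin{itemize}
\item every $3$-colouring of $K_{11}$ has a monochromatic or rainbow triangle;
\item some $3$-colouring of $K_{10}$ has every triangle exactly two-coloured.
\end{itemize}

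For the lower bound, take a suitable $3$-colouring of $K_{10}$, for instance a blow-up or product-type construction, and check directly that no triangle is monochromatic or rainbow. One such construction is a $2$-colouring of $K_5$ with no monochromatic triangle (the pentagon and its complement), blown up so that each vertex becomes two vertices joined by the third colour. I would verify this case by case, or simply cite Chung and Graham for it.

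For the upper bound, fix a vertex $v$ of $K_{11}$ and split its $10$ neighbours into colour classes $N_1,N_2,N_3$. The case analysis runs as follows.
\begin{itemize}
\item Inside $N_i$, any edge of colour $i$ closes a monochromatic triangle with $v$.
\item Between $N_i$ and $N_j$, any edge of the third colour closes a rainbow triangle with $v$.
\item So every edge inside $N_i$ uses one of the other two colours, and every edge between $N_i$ and $N_j$ has colour $i$ or $j$.
\item By pigeonhole some class has $|N_i|\ge4$. The remaining work is to show that these constraints, applied to every vertex, force a forbidden triangle.
\end{itemize}

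That final case analysis on class sizes is the main obstacle. Rather than redo it, I would rely on the published result.
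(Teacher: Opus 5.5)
Your proposal matches the paper, which gives no proof of its own and simply cites Chung and Graham. Your check that a triangle is zero-sum in $\Z_3$ exactly when its labels are all equal or all distinct is correct. Your blown-up pentagon colouring of $K_{10}$ is also valid; it is exactly the labeling the paper uses in Subsection~\ref{subsec:small-lower} to show $R(K_{7,3},\Z_3)\ge 11$. The neighbourhood case analysis you begin for $K_{11}$ is unfinished, but nothing is missing, since you, like the paper, take the upper bound from the published result.
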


Next, we state a lifting lemma showing that $R(K_{s+t,3k},\Z_3)$ is controlled by $R(K_{s,3k},\Z_3)+t$.

\begin{lemma}\label{lem:side-lift}
    Let $s\ge1$, $k\ge1$, and $t\ge0$.  If $3\nmid s$, then $R(K_{s+t,3k},\Z_3)\le R(K_{s,3k},\Z_3)+t$.
\end{lemma}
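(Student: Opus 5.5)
The plan is to absorb the extra $t$ vertices on the large side into a modified labeling of the remaining vertices, using that $s$ is invertible modulo $3$. Write $R=R(K_{s,3k},\Z_3)$ and $N=R+t$, and let $w\colon E(K_N)\to\Z_3$ be an arbitrary labeling. Fix any $t$-set $T\subseteq V(K_N)$ and put $V'=V(K_N)\setminus T$, so $|V'|=R$. For each $v\in V'$ set
\[
    f(v)=w(T,\{v\})=\sum_{u\in T}w(uv)\in\Z_3 .
\]
For any disjoint $A_0,B\subseteq V'$ we then have
\[
    w(A_0\cup T,B)=w(A_0,B)+\sum_{b\in B}f(b).
\]
So it suffices to find a copy of $K_{s,3k}$ in $V'$ with sides $A_0,B$ for which this quantity vanishes.

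\textbf{The modified labeling.} Since $3\nmid s$, the element $s$ is invertible in $\Z_3$; let $\lambda=s^{-1}\in\Z_3$. Define a labeling $w'$ on the complete graph on $V'$ by
\[
    w'(xy)=w(xy)+\lambda\bigl(f(x)+f(y)\bigr).
\]
This is symmetric in $x,y$, so it is a genuine edge-labeling. For disjoint $A_0,B\subseteq V'$ with $|A_0|=s$ and $|B|=3k$, each $f(a)$ with $a\in A_0$ is counted $|B|=3k$ times and each $f(b)$ with $b\in B$ is counted $|A_0|=s$ times. Hence
\[
    w'(A_0,B)=w(A_0,B)+\lambda\Bigl(3k\sum_{a\in A_0}f(a)+s\sum_{b\in B}f(b)\Bigr)=w(A_0,B)+\sum_{b\in B}f(b),
\]
using $3k\equiv0$ and $\lambda s=1$ in $\Z_3$. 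Therefore $w'(A_0,B)=w(A_0\cup T,B)$.

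\textbf{Conclusion.} Since $|V'|=R$, the definition of $R$ gives a $w'$-zero-sum $K_{s,3k}$ with sides $A_0$ and $B$ inside $V'$. By the identity above, $K_{s+t,3k}$ with sides $A_0\cup T$ and $B$ is $w$-zero-sum, and $|A_0\cup T|=s+t$ because $T\cap V'=\emptyset$. As $w$ was arbitrary, $R(K_{s+t,3k},\Z_3)\le R+t$.

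The only delicate point is choosing the correction so that the contribution of the $f(a)$ terms on the $s$-side disappears, which it does because $3\mid 3k$. The contribution on the $3k$-side must be rescaled to exactly $\sum_{b\in B} f(b)$, and this is precisely where the hypothesis $3\nmid s$ is used. No induction on $t$ is needed, which matters because an inductive step from $s$ to $s+1$ could land on a multiple of $3$.
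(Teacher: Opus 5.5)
Your proof is correct and is essentially the paper's argument. The paper uses the correction $w'(xy)=w(xy)+s\bigl(h(x)+h(y)\bigr)$, and since $s^{-1}=s$ in $\Z_3$ for $3\nmid s$, this is exactly your labeling with $\lambda=s^{-1}$; the expansion and the use of $3\mid 3k$ are identical.
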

\begin{proof}
    Set $n=R(K_{s,3k},\Z_3)+t$. Let $K$ be a complete graph on vertex set $V$ with $|V|=n$, and let $w\colon E(K)\to\Z_3$ be an arbitrary edge-labeling.  Choose a $t$-set $Z\subseteq V$ and let $X=V\setminus Z$.
    For $x\in X$, define $h(x)=\sum_{z\in Z}w(zx)$.
    Define an auxiliary edge-labeling $w'$ on the pairs from $X$ by
    \[
        w'(xy)=w(xy)+s(h(x)+h(y)).
    \]
    Since $|X|=R(K_{s,3k},\Z_3)$, there are disjoint sets $A,B\subseteq X$ with $|A|=s$, $|B|=3k$, and $w'(A,B)=0$.  Expanding this equality gives
    \begin{equation}\label{eq:side-lift-expand}
        0=w(A,B)+s |B|\sum_{a\in A}h(a)+s |A|\sum_{b\in B}h(b).
    \end{equation}
    Now $|B|=3k=0$ and $s|A|=s^2=1$ in $\Z_3$, since $3\nmid s$, so \eqref{eq:side-lift-expand} becomes $0=w(A,B)+\sum_{b\in B}h(b)$.  Since $\sum_{b\in B}h(b)=w(Z,B)$, we get $w(A\cup Z,B)=0$.  Thus $K$ contains a zero-sum $K_{s+t,3k}$, which implies that $R(K_{s+t,3k},\Z_3)\le n$, proving the desired result.
\end{proof}

\section{Proof of Theorem~\ref{thm:eventual-general}}\label{sec:eventual-proof}

The first step is an elementary cut-sum identity.
It measures how the cut sum changes when a set of new vertices is inserted into one side of a cut.

\begin{proposition}\label{prop:cut-increment}
    Let $K$ be a complete graph on vertex set $V$, and let $w\colon E(K)\to\Z_q$ be an edge-labeling.
    Let $U\subseteq V$, write $X=V\setminus U$, and for each $x\in X$ let
    \[
        b_x=\left(\sum_{y\in X\setminus\{x\}}w(xy)-\sum_{u\in U}w(ux)\right)\in\Z_q.
    \]
    Then, for every $Q\subseteq X$,
    \begin{equation}\label{eq:cut-increment}
        \partial(U\cup Q)-\partial(U)=\left(\sum_{x\in Q}b_x-2\sum_{\{x,y\}\subseteq Q}w(xy)\right)
        \text{ in }\Z_q.
    \end{equation}
\end{proposition}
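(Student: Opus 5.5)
This is a direct double-counting computation. I will write the new cut as the old cut plus the edges from $Q$ to the rest, minus the edges from $U$ into $Q$, and then regroup the terms into the $b_x$.

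Since $U\cup Q$ and $V\setminus(U\cup Q)=X\setminus Q$ partition $V$, we have
\[
    \partial(U\cup Q)=w(U,X\setminus Q)+w(Q,X\setminus Q),
\]
and $\partial(U)=w(U,X)=w(U,X\setminus Q)+w(U,Q)$. Subtracting gives
\[
    \partial(U\cup Q)-\partial(U)=w(Q,X\setminus Q)-w(U,Q).
\]
Both terms now have to be expressed through the quantities $b_x$.

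Next, sum $b_x$ over $x\in Q$. The second part of $b_x$ contributes exactly $-\sum_{x\in Q}\sum_{u\in U}w(ux)=-w(U,Q)$. The first part is $\sum_{x\in Q}\sum_{y\in X\setminus\{x\}}w(xy)$, and I split it according to whether $y\in X\setminus Q$ or $y\in Q\setminus\{x\}$. Pairs with $y\in X\setminus Q$ give $w(Q,X\setminus Q)$. Pairs with $y\in Q\setminus\{x\}$ count each unordered pair $\{x,y\}\subseteq Q$ twice, giving $2\sum_{\{x,y\}\subseteq Q}w(xy)$. Hence
\[
    \sum_{x\in Q}b_x=w(Q,X\setminus Q)+2\sum_{\{x,y\}\subseteq Q}w(xy)-w(U,Q).
\]
Subtracting the double-counted term and comparing with the previous display yields \eqref{eq:cut-increment}.

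No step is a real obstacle. The only care needed is the factor $2$ from ordered versus unordered pairs inside $Q$, and the fact that the argument uses only abelian-group arithmetic, so it holds in $\Z_q$ for every $q$, including even $q$. The edge cases $Q=\emptyset$ (both sides are $0$) and $U=\emptyset$ (where $\partial(\emptyset)=0$) are covered by the same computation.
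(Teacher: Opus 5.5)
Your proof is correct and follows essentially the same route as the paper's: you reduce the left side to $w(Q,X\setminus Q)-w(U,Q)$, then expand $\sum_{x\in Q}b_x$, with the internal pairs of $Q$ counted twice. The result follows by subtracting that internal-edge term.
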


\begin{proof}
    By the definition of the cut sum,
    \[
        \begin{aligned}
            \partial(U\cup Q)-\partial(U)
             & =\sum_{\substack{x\in Q \\ y\in X\setminus Q}}w(xy)
            -\sum_{\substack{u\in U    \\ x\in Q}}w(ux).
        \end{aligned}
    \]
    Also,
    \[
        \begin{aligned}
            \sum_{x\in Q}b_x
             & =\sum_{\substack{x\in Q \\ y\in X\setminus Q}}w(xy)
            +\sum_{\substack{x,y\in Q  \\ x\ne y}}w(xy)
            -\sum_{\substack{u\in U    \\ x\in Q}}w(ux)  \\
             & =\sum_{\substack{x\in Q \\ y\in X\setminus Q}}w(xy)
            +2\sum_{\{x,y\}\subseteq Q}w(xy)
            -\sum_{\substack{u\in U    \\ x\in Q}}w(ux).
        \end{aligned}
    \]
    Subtracting the internal-edge term gives \eqref{eq:cut-increment}.
\end{proof}

Using this proposition, we prove the following lemma.
\begin{lemma}\label{lem:block-insertion}
    Let $q\ge2$, let $g=\gcd(q,2)$, $d=q/g$, and set $M(q)=R(K_q,\Z_d)$.
    Let $K$ be a complete graph on vertex set $V$, and let $w\colon E(K)\to\Z_q$ be an edge-labeling.
    Suppose that $U\subseteq V$, set $X=V\setminus U$, and assume that $\partial(U)=0$.
    If $|X|\ge gM(q)-g+1$, then there exists a subset $Q\subseteq X$ such that $|Q|=q$ and $\partial(U\cup Q)=0$.
\end{lemma}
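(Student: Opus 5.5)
The plan is to reduce the statement, via Proposition~\ref{prop:cut-increment}, to finding a zero-sum copy of $K_q$ under an auxiliary labeling of the pairs in $X$. Since $\partial(U)=0$, the identity \eqref{eq:cut-increment} shows that $\partial(U\cup Q)=0$ if and only if
\[
\sum_{x\in Q}b_x-2\,\te(Q)=0\quad\text{in }\Z_q .
\]
The left side has a vertex part and an edge part. The key observation is that the vertex part can be spread over the edges of $K_q$. Every vertex of a $K_q$ on $Q$ has degree $q-1$, so
\[
\sum_{\{x,y\}\subseteq Q}(b_x+b_y)=(q-1)\sum_{x\in Q}b_x .
\]
Modulo $q$, and modulo $d$ when $q=2d$, the factor $q-1$ is congruent to $-1$, hence invertible. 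This lets us write the whole expression as an edge sum over $Q$.

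\emph{Case $g=1$ ($q$ odd).} Here $d=q$ and $M(q)=R(K_q,\Z_q)$. On the pairs of $X$ define
\[
w'(xy)=-(b_x+b_y)-2w(xy)\in\Z_q .
\]
Since $(q-1)\equiv-1$, for any $q$-set $Q$ we get $\sum_{\{x,y\}\subseteq Q}w'(xy)=\sum_{x\in Q}b_x-2\te(Q)$. The hypothesis gives $|X|\ge M(q)$, so $X$ contains a $w'$-zero-sum $K_q$. Its vertex set is the desired $Q$.

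\emph{Case $g=2$ ($q=2d$).} Now the factor $2$ in front of $\te(Q)$ is not invertible, so we first arrange parity. Parity of an element of $\Z_q$ is well defined because $q$ is even.
\begin{enumerate}
\item Since $|X|\ge 2M(q)-1$, pigeonhole gives a set $Y\subseteq X$ with $|Y|\ge M(q)$ on which all $b_x$ have the same parity $\varepsilon\in\{0,1\}$.
\item For $x\in Y$ write $b_x=\varepsilon+2c_x$, where $c_x$ is well defined modulo $d$.
\item For a $q$-set $Q\subseteq Y$ this gives
\[
\sum_{x\in Q}b_x-2\te(Q)=q\varepsilon+2\Bigl(\sum_{x\in Q}c_x-\te(Q)\Bigr).
\]
Since $q\varepsilon\equiv0$, this vanishes in $\Z_{2d}$ as soon as $\sum_{x\in Q}c_x-\te(Q)\equiv0\pmod d$.
\item On the pairs of $Y$ define the $\Z_d$-labeling $w''(xy)=-(c_x+c_y)-w(xy)\bmod d$. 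Using $q-1=2d-1\equiv-1\pmod d$, its edge sum over $Q$ equals $\sum_{x\in Q} c_x-\te(Q)$ modulo $d$.
\item Since $|Y|\ge M(q)=R(K_q,\Z_d)$, the labeling $w''$ has a zero-sum $K_q$ in $Y$, which gives $Q$.
\end{enumerate}

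When $d=1$ (that is, $q=2$) the last step is trivial: $M(q)=q$, and any $q$-subset of $Y$ works.

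The main obstacle is this even case. One has to handle the non-invertible factor $2$ by the parity pigeonhole, which is exactly where the factor $g$ and the $-g+1$ in the bound $|X|\ge gM(q)-g+1$ come from. One also has to check that halving $b_x-\varepsilon$ and reducing modulo $d$ is consistent, so that the $\Z_d$-labeling $w''$ really certifies vanishing modulo $2d$.
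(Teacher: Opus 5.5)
Your proposal is correct and takes essentially the same route as the paper. You reduce via Proposition~\ref{prop:cut-increment}, pigeonhole the $b_x$ by residue modulo $g$, write $b_x=\epsilon+g a_x$, and spread the vertex terms over the edges of $K_q$ using $q-1\equiv-1$ modulo $d$, which gives a $\Z_d$-labeling to which $M(q)=R(K_q,\Z_d)$ applies. The paper handles both parities at once with $c=2/g$ and the coefficient $(q-1)^{-1}$, while you split into odd and even $q$ and use $(q-1)^{-1}=-1$; this recovers exactly the paper's labeling $\psi$ in each case, so the difference is only presentational.
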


\begin{proof}
    The goal is to choose $q$ vertices $Q\subseteq X$ so that the cut increment in \eqref{eq:cut-increment} vanishes.
    We first find a subset on which the linear terms $b_x$ lie in a common residue class modulo $g$, and then apply the definition of $M(q)$ to an auxiliary $\Z_d$-edge-labeling.

    For each $x\in X$, let
    \[
        b_x=\sum_{y\in X\setminus\{x\}}w(xy)-\sum_{u\in U}w(ux)\in\Z_q.
    \]
    By Proposition~\ref{prop:cut-increment}, it suffices to find $Q\subseteq X$ with $|Q|=q$ such that
    \[
        \sum_{x\in Q}b_x-2\sum_{\{x,y\}\subseteq Q}w(xy)=0
        \text{ in }\Z_q.
    \]

    Partition \(X\) according to the residue of \(b_x\) modulo \(g\).
    There are \(g\) residue classes.
    Since $|X|\ge gM(q)-g+1=g(M(q)-1)+1$, one of these classes contains at least \(M(q)\) vertices.
    Thus there are a residue \(\epsilon\in\Z_g\) and a subset \(Y\subseteq X\) with \(|Y|\ge M(q)\) such that $b_x\equiv \epsilon\pmod g$ for every \(x\in Y\).
    We also write $\epsilon$ for its representative in $\{0,\ldots,g-1\}$.
    For each $x\in Y$, there is a unique $a_x\in\Z_d$ such that
    \begin{equation}\label{eq:b-decomposition}
        b_x=\epsilon+ga_x
        \text{ in }\Z_q.
    \end{equation}
    Here $ga_x$ denotes the image of $a_x\in\Z_d$ under the natural homomorphism $\Z_d\to\Z_q$, $a\mapsto ga$.

    Let $\overline w(xy)$ denote the reduction of $w(xy)$ modulo $d$, and let $c=2/g$.
    Thus $c=2$ when $q$ is odd and $c=1$ when $q$ is even.
    We define an auxiliary edge-labeling $\psi$ on the pairs from $Y$ as follows.
    If $d=1$, let $\psi\equiv0$.
    If $d>1$, then $q\equiv0\pmod d$, so $q-1\equiv -1\pmod d$ and $q-1$ is invertible in $\Z_d$; define
    \[
        \psi(xy)=-c\,\overline w(xy)+(q-1)^{-1}(a_x+a_y)
        \text{ in }\Z_d.
    \]
    For every $Q\subseteq Y$ with $|Q|=q$, we have
    \begin{equation}\label{eq:psi-sum}
        \sum_{\{x,y\}\subseteq Q}\psi(xy)=\sum_{x\in Q}a_x-c\sum_{\{x,y\}\subseteq Q}\overline w(xy)
        \text{ in }\Z_d.
    \end{equation}
    Indeed, for $d=1$, \eqref{eq:psi-sum} is trivial since $\Z_1$ contains the identity element only.
    For \(d>1\), fix \(x\in Q\). In the sum
    \(\sum_{\{u,v\}\subseteq Q}(a_u+a_v)\), the term \(a_x\) appears once for each edge of \(K_Q\) incident with \(x\). Since \(|Q|=q\), there are \(q-1\) such edges.
    Hence
    \[
        \sum_{\{x,y\}\subseteq Q}(a_x+a_y)=(q-1)\sum_{x\in Q}a_x .
    \]
    This implies
    \[
        \begin{aligned}
            \sum_{\{x,y\}\subseteq Q}\psi(xy)
             & =-c\sum_{\{x,y\}\subseteq Q}\overline w(xy)
            +(q-1)^{-1}\sum_{\{x,y\}\subseteq Q}(a_x+a_y)                   \\
             & =-c\sum_{\{x,y\}\subseteq Q}\overline w(xy)
            +(q-1)^{-1}(q-1)\sum_{x\in Q}a_x                                \\
             & =\sum_{x\in Q}a_x-c\sum_{\{x,y\}\subseteq Q}\overline w(xy).
        \end{aligned}
    \]

    Since $|Y|\ge M(q)=R(K_q,\Z_d)$, the edge-labeling $\psi$ contains a zero-sum copy of $K_q$.
    Thus there exists $Q\subseteq Y$ such that $|Q|=q$ and
    \begin{equation}\label{eq:zero-psi}
        \sum_{\{x,y\}\subseteq Q}\psi(xy)=0
        \text{ in }\Z_d.
    \end{equation}
    For this $Q$, combining \eqref{eq:psi-sum} and \eqref{eq:zero-psi} gives
    \[
        \sum_{x\in Q}a_x-c\sum_{\{x,y\}\subseteq Q}\overline w(xy)=0
        \text{ in }\Z_d.
    \]
    Multiplying by $g$ and using $gc=2$, we obtain
    \begin{equation}\label{eq:key-congruence}
        g\sum_{x\in Q}a_x-2\sum_{\{x,y\}\subseteq Q}w(xy)=0
        \text{ in }\Z_q.
    \end{equation}
    The replacement of $\overline w(xy)$ by $w(xy)$ in \eqref{eq:key-congruence} is valid because $\overline w(xy)$ is the reduction of $w(xy)$ modulo $d$, and $q=gd$ divides $2d$.

    By \eqref{eq:b-decomposition} and $|Q|=q$,
    \begin{equation}\label{eq:b-sum}
        \sum_{x\in Q}b_x=q\epsilon+g\sum_{x\in Q}a_x=g\sum_{x\in Q}a_x
        \text{ in }\Z_q.
    \end{equation}
    Equations \eqref{eq:key-congruence} and \eqref{eq:b-sum} give
    \[
        \sum_{x\in Q}b_x-2\sum_{\{x,y\}\subseteq Q}w(xy)=0
        \text{ in }\Z_q.
    \]
    Therefore \eqref{eq:cut-increment} yields $\partial(U\cup Q)-\partial(U)=0$.
    Since $\partial(U)=0$, we have $\partial(U\cup Q)=0$, as required.
\end{proof}

Now we are ready to prove \cref{thm:eventual-general}.

\medskip

The lower bound $R(K_{s,qk},\Z_q)\ge s+qk$ holds trivially since a complete graph on $s+qk-1$ vertices cannot contain a copy of $K_{s,qk}$.
It remains to prove the upper bound.
Let $K$ be a complete graph on vertex set $V$ with $|V|=s+qk$, and let $w\colon E(K)\to\Z_q$ be an arbitrary edge-labeling.

We construct nested sets $U_0\subset U_1\subset\cdots\subset U_k\subseteq V$ such that, for every $0\le i\le k$, $|U_i|=iq$ and $\partial(U_i)=0$.
Start with $U_0=\emptyset$.
Suppose that $U_i$ has been constructed for some $0\le i<k$, and write $X_i=V\setminus U_i$.
By the choice of $s$,
\[
    |X_i|=s+q(k-i)\ge s+q\ge gM(q)-g+1.
\]
Hence Lemma~\ref{lem:block-insertion} gives a set $Q_i\subseteq X_i$ with $|Q_i|=q$ and $\partial(U_i\cup Q_i)=0$.
Let $U_{i+1}=U_i\cup Q_i$.
Then $|U_{i+1}|=(i+1)q$ and $\partial(U_{i+1})=0$, so the induction continues.

After $k$ steps, $|U_k|=qk$ and $\partial(U_k)=0$.
Let $B=U_k$ and $A=V\setminus U_k$.
Then $|B|=qk$, $|A|=s$, and
\[
    w(A,B)=\partial(B)=\partial(U_k)=0.
\]
The complete bipartite subgraph with parts $A$ and $B$ is therefore a zero-sum copy of $K_{s,qk}$.
Since $w$ was arbitrary, every $\Z_q$-edge-labeling of $K_{s+qk}$ contains a zero-sum $K_{s,qk}$. This proves the desired upper bound and completes the proof of Theorem~\ref{thm:eventual-general}. \qed

\section{Proof of Theorem~\ref{thm:main}}\label{sec:main-proof}

In this section, we first isolate three base results and then use them to prove Theorem~\ref{thm:main}. The proofs of these results are deferred to \Cref{Sec:pf lemmas}.
In Subsection~\ref{subsec:small-lower}, we establish the lower bounds for the exceptional $K_{s,3}$ cases by constructing explicit edge-labelings.
The proof of Theorem~\ref{thm:main} is given in Subsection~\ref{Subsec:pf main}.

\begin{lemma}\label{prop:two}
    For every $k\ge1$, $R(K_{2,3k},\Z_3)=3k+3$.
\end{lemma}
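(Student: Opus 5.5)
The lemma has two halves: a construction on $3k+2$ vertices with no zero-sum $K_{2,3k}$, and a proof that every labeling of $K_{3k+3}$ contains one. In both, a copy of $K_{2,3k}$ is determined by a pair $A=\{u,v\}$ together with the set of vertices left out. Using the notation of \Cref{sec:defs-lemmas}, I will rewrite its label sum in terms of weighted degrees.

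\textbf{Lower bound.} On $n=3k+2$ vertices a copy of $K_{2,3k}$ with $A=\{u,v\}$ must have $B=V\setminus\{u,v\}$. Hence
\[
  w(A,B)=\td(u)+\td(v)-2w(uv)=\td(u)+\td(v)+w(uv)\quad\text{in }\Z_3 .
\]
I will take a Hamiltonian cycle $C_{3k+2}$ and label its edges $1$ and all other edges $0$. Then $\td(x)=2$ for every vertex. A non-adjacent pair gives $2+2+0=1\neq0$, and an adjacent pair gives $2+2+1=2\neq0$. So there is no zero-sum $K_{2,3k}$, which gives $R(K_{2,3k},\Z_3)\ge 3k+3$.

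\textbf{Upper bound.} On $n=3k+3$ vertices a copy is given by distinct $u,v,x$, where $A=\{u,v\}$ and $x$ is the omitted vertex. Its sum is
\[
  S(u,v;x)=\td(u)+\td(v)+w(uv)-w(ux)-w(vx).
\]
I will suppose, for contradiction, that $S(u,v;x)\neq 0$ for all ordered choices. For a fixed triangle $\{u,v,x\}$ this gives three forbidden relations, one for each choice of the omitted vertex. Their sum is
\[
  2\bigl(\td(u)+\td(v)+\td(x)\bigr)-\bigl(w(uv)+w(ux)+w(vx)\bigr),
\]
and the differences of two of them are linear in the edge labels, for example
\[
  S(u,v;x)-S(u,x;v)=\td(v)-\td(x)+2w(uv)-2w(ux).
\]
I plan to split the vertices by the value of $\td$. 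Since $n\ge6$, pigeonhole gives at least two vertices with equal $\td$. For such a pair $u,v$, the condition $S(u,v;x)\neq0$ for every other $x$ pins down $w(ux)+w(vx)$ modulo $3$ to avoid a single value. That constrains the edges from $\{u,v\}$ to the rest. Feeding these constraints back into the definition of $\td(u)$ and $\td(v)$, as sums over all $3k+2$ neighbours, should then produce a contradiction.

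The main obstacle is this last step: turning the local exclusions into a contradiction uniformly in $k$. I would first try to use all three roles of each triangle and sum over all $x$. Summing $S(u,v;x)$ over $x\notin\{u,v\}$ gives
\[
  (3k+1)\bigl(\td(u)+\td(v)+w(uv)\bigr)-\bigl(\td(u)+\td(v)-2w(uv)\bigr)=3k\bigl(\td(u)+\td(v)+w(uv)\bigr)=0 .
\]
So for each pair $u,v$, the values $S(u,v;x)$ over the $3k+1$ choices of $x$ sum to $0$ while all being nonzero. Nonzero elements of $\Z_3$ summing to $0$ must include both $1$ and $2$, so both values occur. This balancing condition, combined with the pigeonhole on $\td$ classes, is what I would push to force a zero. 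If the general argument resists, I would settle $k=1$, that is $K_6$, by direct case analysis on the $\td$ classes. For larger $k$ I would then try an inductive extension that adds three vertices at a time in the spirit of \Cref{lem:block-insertion}. \Cref{lem:side-lift} cannot be used here, since it lifts the side of size $s$ rather than the side of size $3k$.
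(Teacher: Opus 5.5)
\textbf{There is a gap: the upper bound is not proved.} Your lower bound is correct and is the paper's construction. The upper bound is the real content of the lemma, and the proposal never derives a contradiction from $S(u,v;x)\neq0$. The tools you list cannot produce one on their own.

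\emph{The single-pair constraints are consistent.} Take a pair with $\td(u)=\td(v)=d$. The conditions only say $w(ux)+w(vx)\neq 2d+w(uv)$ for every $x$. Summing $w(ux)+w(vx)$ over $x$ gives back $2d+w(uv)$, which is exactly your balancing identity. This is satisfiable: the shifted values $w(ux)+w(vx)-2d-w(uv)$ need only be nonzero with zero sum, for example $1,1,2,2$ when $k=1$. The individual values $\td(u),\td(v)$ add nothing, since they depend on how each $w(ux)+w(vx)$ splits, and this pair does not constrain that split. So ``feeding back into $\td(u)$ and $\td(v)$'' yields nothing, and a contradiction must come from combining many triangles, which you do not do.

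\emph{The fallbacks do not close the gap either.} The $K_6$ case analysis is not carried out. The induction in the spirit of \Cref{lem:block-insertion} fails as stated: that lemma needs at least $M(3)=11$ free vertices. After fixing a zero-sum $K_{2,3k-3}$ inside $K_{3k+3}$, only four vertices remain. Their increments $w(uy)+w(vy)$ may be $1,1,2,2$, and no three of these sum to $0$.

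\textbf{The missing idea is already in your difference formula.} The quantity $S(u,v;x)-S(u,x;v)=\td(v)-\td(x)+2w(uv)-2w(ux)$ is exactly the difference between the two values of $w(vx)$ forbidden by $S(u,v;x)\neq0$ and $S(u,x;v)\neq0$. So whenever it is nonzero, the label $w(vx)$ is forced.

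The paper proceeds as follows.
\begin{enumerate}
\item Fix one vertex $q$ in the role of $u$, and normalize so that $\td(q)=0$ by adding a constant to all labels. This is harmless because $K_{2,3k}$ has $6k$ edges.
\item On $R=V\setminus\{q\}$, set $r_y=\td(y)-w(qy)$ and $\delta_{yz}=w(yz)+\td(y)+\td(z)$. Then $\delta_{yz}=0$ when $r_y\neq r_z$, and $\delta_{yz}\in\{1,2\}$ when $r_y=r_z$.
\item The degree identities become $\sum_{z\in R\setminus\{y\}}\delta_{yz}=r_y+\sum_{x\in R}\td(x)$.
\item The remaining conditions $S(y,z;q)\neq0$ and $S(y,z;t)\neq0$ become constraints purely in $\td$, $r$ and $\delta$.
\item \Cref{lem:k23-structure} rules out every configuration by a finite case analysis. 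It uses the sets of $\td$-values in the layers $\{r=i\}$, the layer sizes modulo $3$, and $|R|=3k+2\equiv2$.
\end{enumerate}
This layer structure is governed by $w(qy)$, not by $\td(y)$ alone, and it is what makes the argument uniform in $k$. Pigeonholing on $\td$ classes never sees it.
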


\begin{lemma}\label{prop:three-six}
    $R(K_{3,6},\Z_3)=9$.
\end{lemma}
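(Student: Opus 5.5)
The lower bound $R(K_{3,6},\Z_3)\ge 9$ holds trivially, since $K_8$ has too few vertices to contain $K_{3,6}$. For the upper bound, take $V$ with $|V|=9$ and $w\colon E(K_9)\to\Z_3$. A zero-sum $K_{3,6}$ is exactly a $3$-set $A$ with $\partial(A)=0$, since then $B=V\setminus A$ has size $6$. Expanding as in Proposition~\ref{prop:cut-increment} with $U=\emptyset$ gives, in $\Z_3$,
\[
\partial(A)=\sum_{x\in A}\td(x)-2\te(A)=\sum_{x\in A}\td(x)+\te(A).
\]
I would linearise this exactly as in the proof of Lemma~\ref{lem:block-insertion}. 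Define
\[
\psi(xy)=w(xy)+2(\td(x)+\td(y)).
\]
Each vertex of a triangle lies on two of its edges, and $2\cdot 2=1$ in $\Z_3$, so every triangle $A$ satisfies $\psi(A)=\te(A)+\sum_{x\in A}\td(x)=\partial(A)$. It therefore suffices to find a zero-sum triangle under $\psi$ on $9$ vertices.

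Lemma~\ref{lem:cg} only handles $11$ vertices, so I need extra structure that $\psi$ inherits from $w$. Compute the $\psi$-degree of $x$ in $\Z_3$, writing $D=\sum_y\td(y)$:
\[
\td_\psi(x)=\td(x)+2\bigl(8\td(x)+D-\td(x)\bigr)=15\,\td(x)+2D=2D .
\]
So every vertex has the same weighted degree under $\psi$. The problem thus reduces to the following claim: no $\Z_3$-labeling of $K_9$ in which all weighted degrees are equal can have every triangle non-zero-sum. Adding a constant to all labels does not change triangle sums, and it keeps the degrees equal, since each degree shifts by $8c$. So I may normalise freely.

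Suppose every triangle is non-zero-sum. In $\Z_3$ a triangle sums to zero exactly when it is monochromatic or rainbow. So the labels $0,1,2$ give a $3$-colouring of $K_9$ in which every triangle uses exactly two colours. Such a colouring is a Gallai colouring, because it has no rainbow triangle. By Gallai's decomposition, $V$ splits into $m\ge2$ blocks such that between any two blocks all edges have a single colour, and at most two colours occur between blocks overall. Because there is no monochromatic triangle, the reduced $2$-coloured $K_m$ has no monochromatic triangle, which forces $m\le5$. If a block contains an edge of the colour joining it to another block, that gives a monochromatic triangle. Hence each block avoids the colours to all other blocks, and this severely restricts block sizes; for example, a block must miss a colour, so it is $2$-coloured without monochromatic triangles and has at most $5$ vertices.

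The main obstacle is the final case analysis. I would enumerate reduced patterns with $m\in\{2,3,4,5\}$: for $m=5$ the pentagon/pentagram colouring, and for smaller $m$ the $2$-colourings of $K_m$ without monochromatic triangles. I would combine each with block sizes summing to $9$ and the allowed colourings inside blocks. In each case I would write the weighted degree of a vertex as (its inter-block contribution) plus (its intra-block degree) and show that these cannot all be congruent mod $3$.

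I expect the equal-degree condition to kill each case quickly. Vertices in blocks of different sizes receive inter-block contributions that differ by multiples of a fixed nonzero colour. Within a block of size at most $5$ with no monochromatic triangle, the degrees also vary mod $3$ unless the sizes are very special. These special cases, such as the pentagonal structure on $5$ vertices, need a direct check.
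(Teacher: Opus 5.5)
Your approach is correct and genuinely different from the paper's. As written, however, the decisive enumeration is only announced, not carried out; here is how it closes.

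The reduction checks out. You have $\psi(A)=\te(A)+\sum_{x\in A}\td(x)=\partial(A)$, and all $\psi$-degrees equal $2D$ because $3\mid |V|$. A $\Z_3$-triangle is zero-sum exactly when it is monochromatic or rainbow, so Gallai's decomposition applies.

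The case analysis becomes very short once you use the sharper form of your own observation. A block joined to the other blocks in \emph{both} reduced colours has all its internal edges in the third colour, so it has at most $2$ vertices. For $m\ge4$, every vertex of the reduced $K_m$ sees both colours: otherwise its $m-1\ge3$ neighbours, all joined to it in one colour, would span a triangle of the other colour. This gives the block sizes:
\begin{itemize}
\item $m=4$ allows at most $8$ vertices, so it cannot occur;
\item $m=5$ forces sizes $(2,2,2,2,1)$;
\item $m=3$ forces $(5,2,2)$, with the $5$-block at the apex;
\item $m=2$ forces $(5,4)$.
\end{itemize}
Every permutation of $\Z_3$ is affine, so it preserves both hypotheses and you may fix the colours. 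Then:
\begin{itemize}
\item For $m=5$, take pentagon colour $1$, pentagram colour $2$, internal colour $0$. Listing the blocks around the pentagon and ending with the singleton, the degrees are $2,1,1,2,0$.
\item For $m=3$, take apex colour $1$, the other reduced colour $2$, third colour $0$. The degree is $2$ on the $5$-block and $0$ on the $2$-blocks.
\item For $m=2$, take cross colour $0$. Vertices of the $5$-block have degree $0$. A vertex $y$ of the $4$-block has degree $2n_1(y)\neq0$, since its number $n_1(y)$ of colour-$1$ edges lies in $\{1,2\}$.
\end{itemize}
So no configuration has equal weighted degrees, and your proof closes.

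The paper argues differently. It applies its signed-count Lemma~\ref{lem:cut-count}, a Chevalley--Warning-type degree argument, with $B=\{v\}$ and $\rho=2$. The assumption excludes every $T$ of size $2$ or $5$, leaving only $T=L$, which contributes $1\neq0$. That proof is a few lines, needs no structure theory, and the same lemma is reused for $K_{4,6}$, where no reduction to triangles is available.

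Your route reuses the linearisation the paper applies to $K_{3,3k}$ for $k\ge3$. It replaces the $11$-vertex threshold of Lemma~\ref{lem:cg} by the extra invariant that $\psi$ has equal weighted degrees when $3\mid|V|$. The cost is Gallai's theorem plus an enumeration. The gain is that it shows exactly which exactly-two-coloured $K_9$'s exist and why none has equal weighted degrees. It also shows the check is not vacuous: on $6$ vertices the same reduction holds, and the $(5,1)$ configuration has equal weighted degrees. That configuration is precisely the paper's $K_6$ example for $K_{3,3}$.
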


\begin{lemma}\label{prop:four-six}
    $R(K_{4,6},\Z_3)=10$.
\end{lemma}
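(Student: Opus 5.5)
The lower bound $R(K_{4,6},\Z_3)\ge 10$ is immediate, since $K_9$ has too few vertices to contain $K_{4,6}$. So the work is the upper bound: every labeling $w\colon E(K_{10})\to\Z_3$ should contain a zero-sum $K_{4,6}$.

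I would not use Lemma~\ref{lem:side-lift} with $s=1$ or $s=2$. The case $s=2$ is exceptional, so $R(K_{2,6},\Z_3)=9$ by Lemma~\ref{prop:two}, and lifting by $t=2$ would only give $11$. Lifting from $s=1$ would need $R(K_{1,6},\Z_3)\le 7$.

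Instead I would use the cut formulation. A zero-sum $K_{4,6}$ in $K_{10}$ is the same as a $6$-set $B\subseteq V$ with $\partial(B)=0$. Proceed by contradiction and suppose every $6$-set $B$ (equivalently, every $4$-set $A=V\setminus B$) has $\partial(B)\neq 0$.

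\textbf{Step 1: a local switching identity.} By Proposition~\ref{prop:cut-increment} with $q=3$, and since $-2=1$ in $\Z_3$, moving a set $Q$ across a cut changes $\partial$ by $\sum_{x\in Q}b_x+\te(Q)$.

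Consider swapping $a\in A$ with $b\in B$. A direct computation gives
\[
\partial(A-a+b)-\partial(A)=\bigl(\td(b)-\td(a)\bigr)+2\bigl(w(a,A)-w(b,A)\bigr)+\text{(a term in } w(ab)\text{)}.
\]
Here $w(a,A)$ means the sum of $w(ay)$ over $y\in A\setminus\{a\}$, and similarly for $w(b,A)$.

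The first goal is to exploit that $\partial$ never hits $0$ over all $\binom{10}{4}=210$ sets $A$. I would define $f(A)=\partial(A)\in\{1,2\}$. Then $f$ is a quadratic function of the indicator vector $\mathbf 1_A$ over $\Z_3$, and it avoids $0$ on the whole slice $|A|=4$.

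\textbf{Step 2: polynomial-method approach.} Write $f(A)=\sum_{x\in A}\td(x)-2\te(A)$. The function $1-f^2$ vanishes on the entire slice $\{|A|=4\}$. It has degree at most $4$ as a polynomial in the indicator variables.

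On the slice $\binom{[10]}{4}$, multilinear polynomials of degree at most $4$ are determined by their values, since $\min(4,6)=4$. So $1-f^2$ must reduce to the zero function on the slice, i.e.\ $f^2\equiv 1$ there.

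I would then expand $f^2$ in the standard harmonic/Johnson-scheme basis over $\Z_3$ and compare degree-$4$ components. The degree-$4$ part of $f^2$ comes from $\te(A)^2$, namely the products $w(xy)w(zu)$ over disjoint pairs. Its vanishing should force strong structure, something like $w$ being of the form $w(xy)=\lambda_x+\lambda_y+\mu$. For such $w$ the cut is an explicit linear-plus-constant expression, and one can then choose $A$ directly by counting residues of the $\lambda_x$ via pigeonhole.

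\textbf{Main obstacle and fallback.} The main obstacle is Step 2. Uniqueness of low-degree representations on the slice holds over $\mathbb{Q}$, but working mod $3$ with a quadratic may not hold or may need care. If that fails, I would fall back on a combinatorial argument.

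First, apply Lemma~\ref{lem:cg} to find structured triangles. Then normalize $w$ by the switching $w(xy)\mapsto w(xy)+\lambda_x+\lambda_y$. This changes $\partial(A)$ by $\sum_{x\in A}6\lambda_x$ plus terms from $B$, and these cancel mod $3$ when $|B|=6$ and $|A|\equiv1$. So $w$ may be normalized to vanish on a spanning star.

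Finally, run a case analysis on the labels of the remaining $K_9$, pairing $4$-sets that differ by a swap to reach a contradiction.
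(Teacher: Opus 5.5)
Your upper-bound argument has a genuine gap: neither route reaches a contradiction.

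\textbf{Step~2.} The claim that multilinear polynomials of degree at most $4$ are determined by their values on $\binom{[10]}{4}$ is false as stated; for example, $x_1+\cdots+x_{10}-4$ vanishes on the slice. The conclusion you draw from it, $f^2\equiv1$ on the slice, is just the contradiction hypothesis restated, so no information has been extracted.

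The harmonic decomposition you then want genuinely breaks down mod $3$ here. Projecting onto constants would mean dividing by $\binom{10}{4}=210\equiv0$. Moreover, by Wilson's rank formula, the $3$-set versus $4$-set inclusion matrix of $[10]$ has rank $111<120$ over $\mathbb F_3$. Hence harmonic polynomials of degree at most $4$ form a space of dimension $220>210$, and their restrictions to the slice are not unique. So there is no well-defined ``degree-$4$ component'' of $f^2$ to compare.

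Most importantly, the inference that such a comparison forces $w(xy)=\lambda_x+\lambda_y+\mu$ is only a hope, and it carries the entire content of the lemma. For such $w$ the finish is easy: the sum is $\sum_{b\in B}\lambda_b$, and EGZ gives a $6$-set $B$ with zero sum. Reaching that structure, however, is the whole problem.

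\textbf{The fallback.} It rests on a false invariance. Write $\Lambda_X=\sum_{x\in X}\lambda_x$. The switching $w(xy)\mapsto w(xy)+\lambda_x+\lambda_y$ changes $w(A,B)$ by $|B|\Lambda_A+|A|\Lambda_B=\Lambda_B$ in $\Z_3$ when $|A|=4$ and $|B|=6$. The condition $|A|\equiv1$ is precisely what prevents cancellation; it is exactly what Lemma~\ref{lem:side-lift} exploits. So you cannot normalize $w$ to vanish on a spanning star. The obvious symmetry, adding a constant $c$ to every edge, does not even change weighted degrees in $K_{10}$, since $9c=0$. The promised case analysis on $K_9$ is also unspecified.

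\textbf{What is missing.} You need a way to extract congruences over $\Z_3$ without any decomposition. Your Step~2 is close in spirit to the paper's first move, which \emph{sums} $f^2$ over the slice instead of decomposing it. The paper proves $\sum_{|A|=4}\partial(A)^2=2\sum_v\td(v)^2$. Under your hypothesis the left side is $210\equiv0$, so the number of vertices of nonzero weighted degree is divisible by $3$, and $Z=\{v:\td(v)=0\}$ satisfies $|Z|\equiv1\pmod 3$.

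The paper then applies the signed Chevalley--Warning-type count of Lemma~\ref{lem:cut-count}, which works because $(1-F_1^2)(1-F_2^2)$ has degree at most $6<|L|=7$. It uses this count twice:
\begin{enumerate}
\item with $B=\{q\}$ for some $q\in Z$ and $Q=P$ a pair, showing that every pair in $V\setminus\{q\}$ lies in $\lambda_P\equiv2\pmod 3$ zero $3$-cuts;
\item with $B=B_0$ a zero $3$-cut, showing $|Z\cap B_0|=2$.
\end{enumerate}
Together with $|Z|\equiv1$, these facts are incompatible, which gives the contradiction.
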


\subsection{Lower-bound constructions for the exceptional $K_{s,3}$ values}\label{subsec:small-lower}

In this subsection, for $s\in\{3,4,5,7\}$, we show that $R(K_{s,3},\Z_3)\ge s+4$ by constructing an edge-labeling of $K_{s+3}$ without a zero-sum $K_{s,3}$ in \(\Z_3\).

For $s=3$, let the vertices of $K_6$ be $a,b,c,d,e,f$.  Assign weight $0$ to every edge incident with $a$. On the remaining five vertices, assign weight $1$ to the edges of the cycle $bcdefb$, and weight $2$ to the edges of its complementary cycle.
Then every vertex has weighted degree $0$.
Now consider any \(3\)-set \(X\) of vertices. If \(a\in X\), then the internal triangle sum of \(X\) is just the nonzero weight of the edge joining the other two vertices. If \(a\notin X\), then the three internal edge weights are of type \(1,1,2\) or \(1,2,2\), and hence again have nonzero sum in \(\mathbb Z_3\). Since all weighted degrees are 0, the cut sum of any \(3\)-set equals its internal triangle sum. Thus no \(3\)-set has zero cut sum, and so this edge-labeling of \(K_6\) contains no zero-sum copy of \(K_{3,3}\). Hence $R(K_{3,3},\Z_3)\ge7.$

For $s=4$, consider $K_7$ and fix a five-cycle $C$.  Assign weight $1$ to the edges of $C$, and weight $0$ to all other edges.  Every copy of \(K_{4,3}\) in \(K_7\) is determined by a cut. If the $4$-vertex side meets $C$ in $2$, $3$, or $4$ vertices, then the number of edges of $C$ crossing the cut is either $2$ or $4$, which is never $0$ in $\Z_3$. Hence this edge-labeling contains no zero-sum copy of \(K_{4,3}\), and therefore $R(K_{4,3},\Z_3)\ge8$.

For $s=5$, partition the vertices of $K_8$ into four pairs.  Assign weight $1$ to two of the pair-edges, weight $2$ to the other two pair-edges, and weight $0$ to every remaining edge.  A $3$-vertex side cuts an odd number of these four pair-edges, namely either one or three.  If it cuts one pair-edge, then the cut sum is $1$ or $2$.  If it cuts three pair-edges, then their weights are either $1,1,2$ or $1,2,2$, whose sums are $1$ and $2$ in $\Z_3$. Thus no \(3\)-cut has sum zero, so this edge-labeling contains no zero-sum copy of \(K_{5,3}\). Hence $R(K_{5,3},\Z_3)\ge9$.

For \(s=7\), partition the vertices of \(K_{10}\) into five pairs \(P_1,P_2,P_3,P_4,P_5.\)
Assign weight \(0\) to the edges inside the pairs. For \(i\ne j\), assign weight \(1\) to every edge between \(P_i\) and \(P_j\) if \(i-j\equiv \pm1 \pmod 5\), and weight \(2\) otherwise. Then each vertex has weighted degree
\(4\cdot 1+4\cdot 2=12\equiv 0 \pmod 3.\)
We claim that every \(3\)-set has nonzero internal triangle sum. Indeed, if two of its vertices lie in the same pair, then the three edge weights are \(0,c,c\) for some \(c\in\{1,2\}\), giving a nonzero sum in \(\mathbb Z_3\). If its vertices lie in three distinct pairs, then the three edge weights are of type \(1,1,2\) or \(1,2,2\), again with nonzero sum. Since all weighted degrees are $0$, the cut sum of any \(3\)-set equals its internal triangle sum. Hence no \(3\)-cut has sum $0$, and so this edge-labeling of \(K_{10}\) contains no zero-sum copy of \(K_{7,3}\). Therefore \(R(K_{7,3},\Z_3)\ge11.\)

\subsection{Proof of Theorem~\ref{thm:main} assuming Lemmas~\ref{prop:two}, \ref{prop:three-six}, and \ref{prop:four-six}}\label{Subsec:pf main}
In this subsection, we complete the proof of Theorem~\ref{thm:main} by determining the exact value of $R(K_{s,3k},\Z_3)$ according to the values of $s$ and $k.$

We shall use the following deletion step.  Let $K$ be a complete graph on vertex set $V$, let $w\colon E(K)\to\Z_3$ be an edge-labeling, and let $S\subseteq V$ with $|S|\ge11$.  Then there is a three-set $T\subseteq S$ such that $\partial(S\setminus T)=\partial(S)$.
To prove this, let $U=V\setminus S$.
For $x\in S$, define
\[
    b_x=\sum_{y\in S\setminus\{x\}}w(xy)-\sum_{u\in U}w(ux),
\]
and define an auxiliary edge-labeling $g$ on the pairs from $S$ by $g(xy)\coloneqq w(xy)+2b_x+2b_y$.
For every three-set $T\subseteq S$, Proposition~\ref{prop:cut-increment} gives
\[
    \partial(U\cup T)-\partial(U)
    =\sum_{t\in T}b_t-2\sum_{\{x,y\}\subseteq T}w(xy)
    =\sum_{t\in T}b_t+\sum_{\{x,y\}\subseteq T}w(xy),
\]
since $-2=1$ in $\Z_3$.  Also, each $b_t$ appears in exactly two edges of the triangle $T$, so
\[
    \sum_{\{x,y\}\subseteq T}g(xy)
    =\sum_{\{x,y\}\subseteq T}w(xy)+2\cdot2\sum_{t\in T}b_t
    =\sum_{\{x,y\}\subseteq T}w(xy)+\sum_{t\in T}b_t.
\]
Thus
\[
    \partial(U\cup T)-\partial(U)
    =\sum_{\{x,y\}\subseteq T}g(xy).
\]
Since $|S|\ge11$, Lemma~\ref{lem:cg} applied to the edge-labeling $g$ gives a three-set $T\subseteq S$ with
\[
    \sum_{\{x,y\}\subseteq T}g(xy)=0.
\]
For this $T$, we have $\partial(U\cup T)=\partial(U)$.  Since $U\cup T=V\setminus(S\setminus T)$ and complementary sets have the same cut sum, this is equivalent to $\partial(S\setminus T)=\partial(S)$.

First, when $s=2$ and $k\ge1$, Lemma~\ref{prop:two} gives $R(K_{2,3k},\Z_3)=3k+3=s+3k+1$.
Next, let $s\in\{3,4,5,7\}$ and $k=1$. Since $3\nmid 2$, Lemma~\ref{prop:two} together with Lemma~\ref{lem:side-lift} gives
\[R(K_{s,3},\Z_3)\le R(K_{2,3},\Z_3)+(s-2)=s+4=s+3k+1.\]
On the other hand, the constructions in Subsection~\ref{subsec:small-lower} show that $R(K_{s,3},\Z_3)\ge s+4$ for $s\in\{3,4,5,7\}$.
Therefore, $R(K_{s,3},\Z_3)=s+4=s+3k+1$ for $s\in\{3,4,5,7\}$ and $k=1$.

For $s=6$ and $k=1$, Lemma~\ref{prop:three-six} gives $R(K_{6,3},\Z_3)=R(K_{3,6},\Z_3)=9=s+3k$.

It remains to consider $s\ge8$ and $k=1.$  Let $K$ be a complete graph on vertex set $V$ with $|V|=s+3$, and let $w\colon E(K)\to\Z_3$ be an arbitrary edge-labeling. Set $S=V$.  Since $|S|=s+3\ge11$ and $\partial(S)=0$, the deletion step above gives a three-set $T\subseteq S$ such that $\partial(S\setminus T)=0$.  The set $S\setminus T$ has size $s$, and its complement has size $3$. Hence this cut gives a zero-sum copy of $K_{s,3}$.  Thus $R(K_{s,3},\Z_3)\le s+3$ for $s\ge8$, and equality follows from the vertex-count lower bound. Therefore,  $R(K_{s,3},\Z_3)=s+3=s+3k$ for $s\ge8$ and $k=1.$

We may now assume that $s\ge 3$ and $k\ge 2.$
For $s=3$ and $k=2$, Lemma~\ref{prop:three-six} gives $R(K_{3,6},\Z_3)=9=s+3k$.
Now suppose $s=3$ and $k\ge3$. Let $K$ be a complete graph on vertex set $V$ with $|V|=3k+3$, and let
$w\colon E(K)\to\Z_3$ be an arbitrary edge-labeling. Recall that $\td(x)=\sum_{y\ne x}w(xy)$. Define a new edge-labeling $a\colon E(K)\to\Z_3$ by $a(xy)=w(xy)-\td(x)-\td(y)$.  For every three-set $T=\{x,y,z\}$, we claim that
\[
    \sum_{\{u,v\}\subseteq T}a(uv)=\partial_w(T).
\]
Indeed, the left-hand side is $\te_w(T)-2\sum_{t\in T}\td(t)$, which equals $\te_w(T)+\sum_{t\in T}\td(t)$ in $\Z_3$. The right-hand side is $\sum_{t\in T}\td(t)-2\te_w(T)$, which is the same element of $\Z_3$.  If there were no zero-sum $K_{3,3k}$, then the edge-labeling $a$ would contain no zero-sum triangle, contradicting Lemma~\ref{lem:cg}.  Hence $R(K_{3,3k},\Z_3)\le3k+3$. The reverse inequality is the vertex-count lower bound, so $R(K_{3,3k},\Z_3)=3k+3=s+3k$ for $s=3$ and $k\ge3$.

Next, consider the case $s=4$ and $k\ge2$.  Let $K$ be a complete graph on vertex set $V$ with $|V|=3k+4$, and let $w\colon E(K)\to\Z_3$ be an arbitrary edge-labeling.  It suffices to find a four-set $A$ with $\partial_w(A)=0$.  Start with $S=V$.  Then $\partial(S)=0$ and $|S|\equiv1\pmod3$.  Repeatedly applying the deletion step above while $|S|>10$, we obtain a $10$-set $S$ such that $\partial(S)=0$.
Let $Q=V\setminus S$, and for each $s\in S$, define $h_s=\sum_{q\in Q}w(sq)$.  Since $\partial(S)=0$, we have $\sum_{s\in S}h_s=0$.  Define an edge-labeling $w'$ on the pairs from $S$ by $w'(uv)=w(uv)+2h_u+2h_v$.  For every $4$-set $A\subseteq S$, let $H_A=\sum_{a\in A}h_a$.  Since $|S|=10$, $|A|=4$, and $|S\setminus A|=6$, we have
\begin{align}
    \sum_{a\in A,\ b\in S\setminus A}w'(ab)
     & =\sum_{a\in A,\ b\in S\setminus A}w(ab)+2\left(6H_A+4\sum_{b\in S\setminus A}h_b\right)\notag \\
     & =\sum_{a\in A,\ b\in S\setminus A}w(ab)+2(-H_A)\notag                                         \\
     & =\sum_{a\in A,\ b\in S\setminus A}w(ab)+H_A=\partial_w(A).\label{eq:k10-transfer}
\end{align}
By Lemma~\ref{prop:four-six}, the edge-labeling $w'$ contains a $4$-set $A$ whose $K_{4,6}$-cut inside $S$ has sum $0$.  Equation~\eqref{eq:k10-transfer} then gives $\partial_w(A)=0$.  Hence $R(K_{4,3k},\Z_3)\le3k+4$ for $k\ge2$. The reverse inequality follows from the vertex-count lower bound, and therefore $R(K_{4,3k},\Z_3)=3k+4=s+3k$ for $s=4$ and $k\ge2.$

Finally, let $s\ge5$ and $k\ge2$. Since $3\nmid 4$, Lemma~\ref{lem:side-lift} gives
\[
    R(K_{s,3k},\Z_3)\le R(K_{4,3k},\Z_3)+(s-4)=3k+4+s-4=s+3k.
\]
Again, the reverse inequality is the vertex-count lower bound.  Thus $R(K_{s,3k},\Z_3)=s+3k$ for $s\ge4$ and $k\ge2$.
This completes the proof of Theorem~\ref{thm:main}.  \qed

\section{The proofs of Lemmas~\ref{prop:two}, \ref{prop:three-six}, and \ref{prop:four-six}}\label{Sec:pf lemmas}
In this section, we complete the proofs of Lemmas~\ref{prop:two}, \ref{prop:three-six}, and \ref{prop:four-six}, respectively.

\subsection{Proof of Lemma~\ref{prop:two}}\label{subsec:prop2}

We first prove the lower bound.  Let $K$ be a complete graph on vertex set $V$ with $|V|=3k+2$.  Choose a Hamilton cycle $C$ in $K$, assign weight $1$ to every edge of $C$, and assign weight $0$ to every other edge.  Any copy of $K_{2,3k}$ in $K$ uses all vertices.  If its two-vertex side is $A$, then its weight is exactly the number of edges of $C$ crossing the cut \((A,V\setminus A)\).  This number is $2$ if the two vertices of $A$ are adjacent on the cycle and $4$ otherwise.  In either case, it is nonzero in $\Z_3$.  Hence this edge-labeling contains no zero-sum copy of \(K_{2,3k}\), and therefore $R(K_{2,3k},\Z_3)>3k+2$. Thus $R(K_{2,3k},\Z_3)\ge3k+3$.

We now prove the upper bound.  Let $K$ be a complete graph on vertex set $V$ with $|V|=3k+3$, and let $w\colon E(K)\to\Z_3$ be an arbitrary edge-labeling.  For distinct vertices $x,y,z\in V$, define
\[
    p(x,y;z)=\sum_{u\in V\setminus\{x,y,z\}}\bigl(w(xu)+w(yu)\bigr).
\]
This is the weight of the copy of $K_{2,3k}$ with two-vertex side $\{x,y\}$ and whose \(3k\)-vertex side is $V\setminus\{x,y,z\}$.  Suppose, for the contrary, that
\begin{equation}\label{eq:k23-no-p}
    p(x,y;z)\ne0\text{ for all distinct }x,y,z\in V.
\end{equation}
For $v\in V$, recall the weighted degree $\td(v)=\sum_{u\ne v}w(vu)$.  Adding a fixed constant $c\in\Z_3$ to every edge does not change the weight of any copy of  $K_{2,3k}$, since $K_{2,3k}$ has $6k$ edges.  This operation changes each weighted degree by $(3k+2)c=2c$.  Thus we may normalize the edge-labeling so that, for a fixed vertex \(q\in V\), $\td(q)=0$.

Let $R=V\setminus\{q\}$.  For $y\in R$, set $a_y=w(qy)$, $b_y=\td(y)$ and $r_y=b_y-a_y$.
Since $\td(q)=0$, we have $\sum_{y\in R}a_y=0$.  Hence, we obtain that $B\coloneqq\sum_{y\in R}b_y=\sum_{y\in R}r_y$.

We next translate the assumption~\eqref{eq:k23-no-p} into structural constraints on the quantities \(b_y\) and \(r_y\).
For distinct $y,z\in R$, we have $p(q,y;z)\ne0$ and $p(q,z;y)\ne0$. Thus~\eqref{eq:k23-no-p} implies
\begin{equation}\label{eq:k23-two-forbidden}
    w(yz)\ne b_y+a_y-a_z\text{ and }w(yz)\ne b_z+a_z-a_y.
\end{equation}
The two forbidden values differ by $r_y-r_z$.  Therefore, if $r_y\ne r_z$, then the two forbidden values are distinct, so the conditions imply that \(w(yz)\) is the third element of \(\Z_3\), namely $w(yz)=-b_y-b_z$.  If $r_y=r_z$, then the two forbidden values coincide. Define $\delta_{yz}=w(yz)+b_y+b_z$.
\begin{equation}\label{eq:k23-delta-type}
    \delta_{yz}=0\text{ if }r_y\ne r_z\text{ and }\delta_{yz}\in\{1,2\}\text{ if }r_y=r_z.
\end{equation}
Indeed, in the case \(r_y=r_z\), the value \(\delta_{yz}=0\) is exactly the forbidden value from \eqref{eq:k23-two-forbidden}.

Now fix $y\in R$. Using \(w(yz)=\delta_{yz}-b_y-b_z\), the degree equation at \(y\) gives
\[
    b_y=a_y+\sum_{z\in R\setminus\{y\}}w(yz)=a_y+\sum_{z\in R\setminus\{y\}}\delta_{yz}-\sum_{z\in R\setminus\{y\}}b_y-\sum_{z\in R\setminus\{y\}}b_z.
\]
Since $|R|-1=3k+1\equiv1\pmod3$, this simplifies to
\begin{equation}\label{eq:k23-struct-degree}
    \sum_{z\in R\setminus\{y\}}\delta_{yz}=r_y+B.
\end{equation}
The condition $p(y,z;q)\ne0$ gives
\begin{equation}\label{eq:k23-struct-pair}
    \delta_{yz}-b_y-b_z+r_y+r_z\ne0.
\end{equation}
Finally, for distinct $y,z,t\in R$, the condition $p(y,z;t)\ne0$ gives
\begin{equation}\label{eq:k23-struct-triple}
    \delta_{yz}+b_y+b_z-b_t-\delta_{yt}-\delta_{zt}\ne0.
\end{equation}
Thus any counterexample to the upper bound yields the abstract structure described in the following lemma.

\begin{lemma}\label{lem:k23-structure}
    There is no finite set $X$ with data $r_x,b_x\in\Z_3$ and symmetric values $\delta_{xy}\in\Z_3$ for distinct $x,y\in X$ such that $|X|\ge 5$, $|X|\equiv2\pmod3$, $B\coloneqq\sum_xb_x=\sum_xr_x$, and conditions \eqref{eq:k23-delta-type}, \eqref{eq:k23-struct-degree}, \eqref{eq:k23-struct-pair}, and \eqref{eq:k23-struct-triple} all hold with $R$ replaced by $X$.
\end{lemma}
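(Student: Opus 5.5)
The plan is to exploit the rigid dichotomy in \eqref{eq:k23-delta-type}. Partition $X$ into the three classes $C_i=\{x\in X: r_x=i\}$ for $i\in\Z_3$. Then $\delta$ vanishes on every pair between different classes and takes values in $\{1,2\}$ inside each class. Consequently the degree condition \eqref{eq:k23-struct-degree} only involves pairs inside a class:
\[
\sum_{z\in C_i\setminus\{y\}}\delta_{yz}=i+B\quad\text{for all }y\in C_i,
\]
so on each class the $\delta$-labelled complete graph is ``regular'' with a prescribed degree. Summing this over $y\in C_i$ gives $|C_i|(i+B)=2\sum_{\{y,z\}\subseteq C_i}\delta_{yz}$. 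The global constraints are $B=|C_1|+2|C_2|$ and $|C_0|+|C_1|+|C_2|\equiv 2\pmod 3$.

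The first step is to use the triple condition \eqref{eq:k23-struct-triple} across classes to control the $b$-values.

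\emph{Three different classes.} For $y,z,t$ in three different classes all $\delta$'s vanish, so $b_y+b_z\ne b_t$ for every ordering of the triple.

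\emph{Two in one class, one outside.} Take $y,z\in C_i$ and $t\notin C_i$. The ordering with $t$ as the third vertex gives $\delta_{yz}+b_y+b_z\ne b_t$. The ordering with $y$ as the third vertex gives $b_z+b_t-b_y-\delta_{yz}\ne 0$, and symmetrically with $z$ third.

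Together with the pair condition \eqref{eq:k23-struct-pair}, I expect these to force strong structure whenever at least two classes are nonempty. Specifically, I expect that $b$ is determined on each class up to the value of $\delta$, and that two different classes cannot both have size at least $2$. I would carry this out by fixing a pair in one class and running over the three possible values of $b_t$.

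The second step handles a single large class, i.e.\ essentially all of $X$ has the same $r$. Then every $\delta_{yz}\in\{1,2\}$, and the triple condition inside the class reads
\[
\delta_{yz}-\delta_{yt}-\delta_{zt}+b_y+b_z-b_t\ne 0 .
\]
This should be imposed for all three choices of the distinguished vertex $t$. Adding or subtracting the three versions should eliminate the $b$'s up to a simple relation. I expect to deduce that $b$ is constant on the class, or that $\delta$ is determined by a $2$-colouring of the vertices. Feeding this into the regularity condition and the pair condition \eqref{eq:k23-struct-pair} should yield a congruence on $|C_i|$ modulo $3$ that is incompatible with $|X|\equiv 2\pmod 3$ and $|X|\ge5$.

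The degenerate mixed configurations (one big class plus one or two singletons) will be treated by the same computations, as small separate cases. Here the condition $|X|\ge5$ is used to guarantee at least three vertices in some class, so that a within-class triangle exists.

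I expect the main obstacle to be the within-class triangle analysis of the second step. The $\{1,2\}$-labelling with triangle constraints resembles the Chung--Graham extremal structure, and it is not a priori clear that the constraints collapse to a clean description. If the direct elimination does not close, I would first try summing the triple condition over all $t\in C_i\setminus\{y,z\}$ for a fixed pair $y,z$ and comparing with the degree identities. This should turn the ``$\ne$'' conditions into a counting contradiction, since a sum of terms that each avoid one value is restricted. Failing that, I would fall back on a case analysis by the residues of $|C_i|$ modulo $3$.
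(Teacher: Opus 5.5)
There is a genuine gap. Your opening reductions match the paper's starting point: splitting $X$ by the value of $r$, $\delta=0$ across classes, \eqref{eq:k23-struct-degree} as a per-class regularity condition, and $B=|C_1|+2|C_2|$. But everything after them is stated as an expectation, and the load-bearing expectation is false.

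You intend to show, by fixing a pair in one class and varying $b_t$ in \eqref{eq:k23-struct-pair} and \eqref{eq:k23-struct-triple}, that two different classes cannot both contain two vertices. These conditions do not imply that. Take $b\equiv 0$, $C_2=\emptyset$, $\delta= c\in\{1,2\}$ on all pairs inside $C_0$, $\delta=2$ inside $C_1$, and $\delta=0$ across. The pair expression equals $c$ inside $C_0$, and $1$ inside $C_1$ and across. The triple expression equals $\pm c$ or $\pm 2$ for every triple and every choice of distinguished pair. So \eqref{eq:k23-delta-type}, \eqref{eq:k23-struct-pair} and \eqref{eq:k23-struct-triple} hold for all sizes of $C_0$ and $C_1$.

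With $|C_0|=4$ and $|C_1|=3$, one also has $\sum_x b_x=\sum_x r_x=0$, and \eqref{eq:k23-struct-degree} holds: $3c=0$ on $C_0$ and $2\cdot 2=1$ on $C_1$. This configuration is excluded \emph{only} because $|X|=7\not\equiv 2\pmod 3$. So in the multi-class case the contradiction is necessarily a global congruence. You must determine which $b$-values can occur in each class and which internal $\delta$-values are then forced. Then evaluate $B$ both as $\sum_x b_x$ and as $\sum_x r_x$, feed it into the per-class degree equations, and compare the resulting residues of the class sizes with $|X|\equiv 2$. The paper's case analysis over the possible sets of $b$-values on the layers does exactly this, first with three nonempty layers and then with two, including cases where both layers are large. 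Your plan expects to reduce to one large class plus singletons and has no substitute for this.

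The single-class step is also not yet an argument. Condition \eqref{eq:k23-struct-triple} only says certain quantities are nonzero, so ``adding or subtracting the three versions'' yields nothing. Your fallback rests on a false principle: in $\Z_3$ a sum of two or more terms from $\{1,2\}$ can take every value, so summing the triple condition over $t$ gives no restriction.

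What makes this case tractable in the paper is a normalisation. With $r_x=r$ for all $x$ (so $B=2r$), set $\beta_x=b_x+2r$ and $\eta_{xy}=\delta_{xy}+\beta_x+\beta_y$. Then \eqref{eq:k23-delta-type} and \eqref{eq:k23-struct-pair} say precisely that $\eta_{xy}=0$ when $\beta_x+\beta_y\ne 0$, and $\eta_{xy}\in\{1,2\}$ otherwise. Condition \eqref{eq:k23-struct-degree} says every $\eta$-degree is $0$. The problem becomes a count of $m_i=|\{x:\beta_x=i\}|$ subject to $m_0+m_1+m_2\equiv 2$ and $m_1+2m_2\equiv 0$. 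The paper uses that $m_0>0$ forces $m_0\equiv 1$, and then checks $r=0$ and $r=1$ separately ($r=2$ is symmetric).
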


\begin{proof}
    For $i\in\Z_3$, let $X_i=\{x\in X:r_x=i\}$ and $C_i=\{b_x:x\in X_i\}$.

    First suppose that all three sets \(X_0,X_1,X_2\) are nonempty.
    If $i\ne j$, $x\in X_i$, and $y\in X_j$, then $\delta_{xy}=0$ by \eqref{eq:k23-delta-type}. Hence \eqref{eq:k23-struct-pair} gives
    \begin{equation}\label{eq:CiCj-avoid}
        b_x+b_y\ne i+j.
    \end{equation}
    Moreover, if \(x\in X_i\), \(y\in X_j\), and \(z\in X_\ell\), where \(i,j,\ell\) are distinct, then all three relevant \(\delta\)-terms are zero, and \eqref{eq:k23-struct-triple} gives
    \begin{equation}\label{eq:CiCjCl-avoid}
        b_z\ne b_x+b_y.
    \end{equation}
    These two restrictions imply $(C_0,C_1,C_2)=(\{0\},\{0\},\{1\})$ or $(C_0,C_1,C_2)=(\{0\},\{2\},\{0\})$.
    Indeed, if \(1\in C_0\), then \eqref{eq:CiCj-avoid} implies \(C_1\subseteq\{1,2\}\) and \(C_2\subseteq\{0,2\}\). Since \eqref{eq:CiCjCl-avoid} also forbids \(C_1+C_2\) from meeting \(C_0\), we obtain \(C_1=\{2\}\) and \(C_2=\{0\}\), which then violates \eqref{eq:CiCjCl-avoid}. Similarly, \(2\notin C_0\). Thus \(C_0=\{0\}\). Then \eqref{eq:CiCj-avoid} gives \(C_1\subseteq\{0,2\}\) and \(C_2\subseteq\{0,1\}\), while \eqref{eq:CiCjCl-avoid} implies \(C_1\cap C_2=\emptyset\) and forbids \(C_1+C_2\) from containing \(0\). The two possibilities displayed above are the only ones.

    Consider first
    \((C_0,C_1,C_2)=(\{0\},\{0\},\{1\}).\)
    For any internal edge of a layer \(X_i\), by~\eqref{eq:k23-delta-type}, the value of \(\delta\) is contained in \(\{1,2\}\). Applying \eqref{eq:k23-struct-triple} with two vertices in \(X_i\) and the third vertex in a suitable different layer implies that every internal \(\delta\)-edge in each \(X_i\) has value \(2\). Therefore \eqref{eq:k23-struct-degree} gives $2(|X_i|-1)=i+B$ for $i=0,1,2$. Here $B=|X_2|$ since the only nonzero \(b\)-values occur in \(X_2\).  These congruences imply $|X_0|\equiv0$, $|X_1|\equiv2$, and $|X_2|\equiv1$, so $|X|\equiv0$, contradicting $|X|\equiv2$.
    The second possibility,
    \((C_0,C_1,C_2)=(\{0\},\{2\},\{0\}),\)
    is analogous. In this case the same conditions imply that every internal \(\delta\)-edge has value \(1\). Thus $|X_i|-1=i+B$ for $i=0,1,2$. Since now \(B=2|X_1|\), these congruences imply $|X_0|\equiv0$, $|X_1|\equiv1$, and $|X_2|\equiv2$. Again \(|X|\equiv 0\), a contradiction. Hence at most two of the layers \(X_0,X_1,X_2\) are nonempty.

    Next suppose that exactly two layers are nonempty. If \(X_i\) and \(X_j\) are the two nonempty layers and \(u,v\in C_i\), then an internal \(\delta\)-edge in \(X_i\) joining vertices with \(b\)-values \(u\) and \(v\) must lie in
    \begin{equation}\label{eq:k23-D}
        \delta_i(u,v;C_j)
        =\{1,2\}
        \setminus (\{u+v-2i\}\cup \{c-u-v:c\in C_j\}).
    \end{equation}
    The first forbidden value comes from \eqref{eq:k23-struct-pair}, and the second set of forbidden values comes from \eqref{eq:k23-struct-triple} applied with the third vertex in the other layer $X_j$.

    We first claim that the two nonempty layers cannot be $X_1$ and $X_2.$
    Indeed, we first show that $|C_1|=|C_2|=1.$
    For cross-layer edges, \eqref{eq:CiCj-avoid} gives $u+v\ne0$ for all $u\in C_1$ and $v\in C_2$.
    Hence neither $C_1$ nor $C_2$ has three elements. Moreover, two two-element subsets of $\Z_3$ always contain a pair whose sum is $0$. Thus at least one of $C_1,C_2$ is a singleton. It remains to rule out the case in which the other has two elements.
    Suppose on the contrary that for some \(i\in\{1,2\}\), $|C_i|=2$ and $|C_j|=1,$ where $j=3-i.$
    Let $C_i=\{a,b\}$ and $C_j=\{c\}.$
    If $|X_j|\ge 2,$ then by~\eqref{eq:k23-D},
    the $\delta$-value of the edge of $X_j$ cannot be $2c-2j, a-2c, b-2c$. Since $a-2c\neq b-2c,$ one of $a-2c$ and $b-2c$ equals 0, i.e., $a=2c$ or $b=2c.$
    Then, by~\eqref{eq:CiCj-avoid}, $i+j\neq0,$ which is a contradiction to $j=3-i$.
    Therefore, $|X_j|=1,$ and thus $|X_i|\equiv1\pmod3$.
    Now $B=i|X_i|+j=i+j=0.$
    Choose the vertex of $X_j$, by~\eqref{eq:k23-struct-degree}, we get $0=j+B=j,$ which is a contradiction to $j=3-i$ and $i\in[2]$. Thus, $|C_1|=|C_2|=1.$

    Suppose that $0\in C_1,$ then by~\eqref{eq:CiCj-avoid}, $0\notin C_2$.
    If $C_1=\{0\}$ and $C_2=\{1\},$ then $B=|X_1|+2|X_2|=|X_2|,$ which implies that $|X|=|X_1|+|X_2|\equiv0\pmod3$, which contradicts the assumption.
    If $C_1=\{0\}$ and $C_2=\{2\},$ then $B=|X_1|+2|X_2|=2|X_2|,$ which implies that $|X_1|\equiv0, |X_2|\equiv2\pmod3$, since $|X|\equiv2\pmod3$.
    Choose two vertices from \(X_1\) and one vertex from $X_2$, by~\eqref{eq:k23-D}, we get a contradiction to the $\delta$ value of the edge in $X_1$.
    Therefore, we may assume that $0\notin C_1.$

    In the two cases \((C_1,C_2)=(\{1\},\{0\})\) and
    \((C_1,C_2)=(\{1\},\{1\})\), we have
    \(B\equiv2\), \(|X_1|\equiv2\), and \(|X_2|\equiv0\pmod3\).
    If \(C_2=\{0\}\), by~\eqref{eq:k23-D}, the $\delta$-value of edges of $X_1$ is 2, and if \(C_2=\{1\}\), then the $\delta$-value of edges of $X_1$ is 1.
    Choose a vertex from $X_1,$ by~\eqref{eq:k23-struct-degree}, if \(C_2=\{0\}\), then we get $2(|X_1|-1)=1+B=0,$
    if \(C_2=\{1\}\), then we get $|X_1|-1=1+B=0,$
    both of which contradict that $|X_1|\equiv2\pmod3$.
    Therefore, we may assume that $1\notin C_1.$

    For the case when $C_1=\{2\}$ and \(C_2=\{0\}\),
    we get that $B\equiv |X_1|+2|X_2|\equiv2|X_1|\pmod3$.
    Thus, $|X_1|\equiv2|X_2|\pmod3$, and hence $|X|\equiv0\pmod3$, contradicting $|X|\equiv2\pmod3$.
    If $C_1=\{2\}$ and \(C_2=\{2\}\), we get that $B\equiv |X_1|+2|X_2|\equiv2|X_1|+2|X_2|\pmod3$.
    Thus, $|X_1|\equiv0\pmod3$, $|X_2|\equiv2\pmod3$, and $B\equiv1\pmod3$.
    By~\eqref{eq:k23-D}, there is no allowable $\delta$-value for an internal edge in $X_1$. Since $X_1\ne\emptyset$, this gives $|X_1|=1$, contradicting $|X_1|\equiv0\pmod3$.
    Thus, the two nonempty layers can only be $X_0, X_1$ or $X_0, X_2.$

    Replacing \((r_x,b_x,\delta_{xy})\) by
    \((2r_x,2b_x,2\delta_{xy})\), if necessary, preserves all the conditions
    and interchanges the roles of \(X_1\) and \(X_2\). Hence it remains to
    consider the case \(X_0,X_1\ne\emptyset\).

    We first record a simple consequence of the pair and triple conditions.
    Suppose \(i\in\{0,1\}\), \(j=1-i\), and an internal edge in \(X_i\) joins
    vertices with \(b\)-values \(u,v\). If \(c\in C_j\), then, by
    \eqref{eq:k23-struct-pair} and by applying \eqref{eq:k23-struct-triple}
    with each of the three possible distinguished pairs, its \(\delta\)-value
    must lie in
    \[
        \Gamma_i(u,v;c)
        \coloneqq
        \{1,2\}\setminus
        \{u+v-2i,\ c-u-v,\ u+c-v,\ v+c-u\}.
    \]
    Thus \(\Gamma_i(u,v;c)\ne\emptyset\) whenever such an edge exists.

    For cross-layer edges, \eqref{eq:CiCj-avoid} gives $u+v\ne1$ for all $u\in C_0$ and $v\in C_1$.
    Hence neither \(C_0\) nor \(C_1\) has three elements; and if one of them
    has two elements, then the other is a singleton.
    First suppose that \(C_0=\{u,v\}\) and \(C_1=\{c\}\).
    The conditions \(u+c\ne1\) and \(v+c\ne1\) imply that
    \(c=2,0,1\) when \(C_0=\{0,1\},\{0,2\},\{1,2\}\), respectively.
    Substituting these three possibilities into \(\Gamma_0(u,v;c)\), only the first gives a nonempty allowable set.
    Thus the only possible case with \(|C_0|=2\) is
    \[
        (C_0,C_1)=(\{0,1\},\{2\}).
    \]
    If instead \(C_1=\{u,v\}\) and \(C_0=\{c\}\), the same check with \(\Gamma_1(u,v;c)\) gives an empty allowable set in all three cases.
    Therefore the only possible case in which one of \(C_0,C_1\) has two
    elements is $(C_0,C_1)=(\{0,1\},\{2\})$.

    It remains to consider the case where both \(C_0\) and \(C_1\) are
    singletons.  Write \(C_0=\{a\}\), \(C_1=\{b\}\), and let
    \(n_i=|X_i|\).  Then $an_0+bn_1\equiv B\equiv n_1$ and $n_0+n_1\equiv2\pmod3$.
    Together with \(a+b\ne1\), this leaves only
    \[
        (a,b)=(0,0),\ (0,2),\ (1,1),\ (2,1),
    \]
    since \((a,b)=(1,2)\) and \((2,0)\) would imply
    \(n_0+n_1\equiv0\pmod3\).

    The case \((a,b)=(1,1)\) is impossible: the two congruences give
    \(n_0\equiv0\) and \(n_1\equiv2\), so \(X_1\) contains an internal edge.
    However, we have $\Gamma_1(1,1;1)=\emptyset$, contradicting the existence of its \(\delta\)-value.

    Consequently the only remaining possibilities are
    \[
        (C_0,C_1)=(\{0\},\{0\}),\
        (\{0\},\{2\}),\
        (\{0,1\},\{2\}),\
        (\{2\},\{1\}).
    \]
    We now eliminate these four possibilities.
    First suppose
    \((C_0,C_1)=(\{0\},\{0\}).\)
    Then \(B=0\), and since \(B=\sum r_x\), we have \(|X_1|\equiv 0\). Hence \(|X_0|\equiv 2\). Inside \(X_0\), condition \eqref{eq:k23-struct-triple} implies that every triangle is monochromatic in the two possible \(\delta\)-values \(1\) and \(2\). Thus all internal \(\delta\)-edges of \(X_0\) have one common value \(c\in\{1,2\}\). The degree equation \eqref{eq:k23-struct-degree} at a vertex of \(X_0\) then gives
    \(c(|X_0|-1)=0,\)
    so \(|X_0|\equiv 1\), contradicting \(|X_0|\equiv 2\).
    Next suppose
    \((C_0,C_1)=(\{0\},\{2\}).\)
    Let \(n_i=|X_i|\). Then
    \(B=2n_1=n_1,\)
    so \(n_1\equiv 0\). The conditions imply that every internal \(\delta\)-edge of \(X_1\) has value \(1\). Therefore, by~\eqref{eq:k23-struct-degree}, a vertex of \(X_1\) gives
    \(n_1-1=1+B.\)
    Since \(B=0\), this gives \(n_1\equiv 2\), contradicting \(n_1\equiv 0\).
    Now suppose
    \((C_0,C_1)=(\{0,1\},\{2\}).\)
    Let
    \(n_{00}=|\{x\in X_0:b_x=0\}|\), \(n_{01}=|\{x\in X_0:b_x=1\}|\), \(n_1=|X_1|.\)
    The identity \(B=\sum r_x\) gives
    \(n_{01}+2n_1=n_1,\)
    so
    \(n_1\equiv 2n_{01}.\)
    Since \(|X|\equiv 2\), it follows that
    \(n_{00}\equiv 2.\)
    The conditions imply the following allowable internal \(\delta\)-values in \(X_0\):
    \[\delta=1 \text{ on } (0,0)\text{-pairs},\
        \delta=2 \text{ on } (0,1)\text{-pairs},\
        \delta=1 \text{ on } (1,1)\text{-pairs}.
    \]
    Thus the \(\delta\)-degree, inside \(X_0\), of a vertex with \(b\)-value \(0\) is
    \((n_{00}-1)+2n_{01}.\)
    By \eqref{eq:k23-struct-degree}, this must equal \(B=n_1\). Hence
    \((n_{00}-1)+2n_{01}=n_1.\)
    Using \(n_{00}\equiv 2\) and \(n_1\equiv 2n_{01}\), this becomes
    \(1+2n_{01}=2n_{01},\)
    a contradiction.
    Finally suppose
    \((C_0,C_1)=(\{2\},\{1\}).\)
    Let \(n_i=|X_i|\). Then
    \(B=2n_0+n_1=n_1,\)
    so \(n_0\equiv 0\). Since \(|X|\equiv 2\), we have \(n_1\equiv 2\). The conditions imply that every internal \(\delta\)-edge of \(X_0\) has value \(2\). Thus, by~\eqref{eq:k23-struct-degree}, a vertex of \(X_0\) gives
    \(2(n_0-1)=B=n_1,\) which yields \(1=2\), a contradiction. Hence exactly two layers cannot occur.

    It remains to consider the case in which all vertices lie in one layer.  Write $r_x=r$ for all $x\in X$.  Since $|X|\equiv2$, we have $B=\sum_xr_x=2r$.  Define $\beta_x=b_x+2r$.  Then
    \begin{equation}\label{eq:beta-sum-zero}
        \sum_x\beta_x=0.
    \end{equation}
    Condition \eqref{eq:k23-struct-pair} becomes $\delta_{xy}\ne\beta_x+\beta_y$.  Define $\eta_{xy}=\delta_{xy}+\beta_x+\beta_y$.  Then $\eta_{xy}=0$ unless $\beta_x+\beta_y=0$, and in the latter case $\eta_{xy}\in\{1,2\}$.  Equations \eqref{eq:k23-struct-degree} and \eqref{eq:k23-struct-triple} become
    \begin{equation}\label{eq:eta-degree}
        \sum_{y\ne x}\eta_{xy}=0\text{ for }x\in X,
    \end{equation}
    \begin{equation}\label{eq:eta-triple}
        \eta_{xy}-\eta_{xz}-\eta_{yz}+\beta_x+\beta_y+\beta_z+r\ne0
    \end{equation}
    for all distinct $x,y,z$.

    For \(i=0,1,2\), let $m_i=|\{x:\beta_x=i\}|$.  From $|X|\equiv2$ and \eqref{eq:beta-sum-zero}, we have
    \begin{equation}\label{eq:mi-basic}
        m_0+m_1+m_2\equiv2\text{ and }m_1+2m_2\equiv0.
    \end{equation}
    We first note that if \(m_0>0\), then
    \(m_0\equiv 1.\)
    Indeed, consider the vertices with \(\beta=0\). On this set, every \(\eta\)-edge has value \(1\) or \(2\), every vertex has \(\eta\)-degree \(0\) by \eqref{eq:eta-degree}, and \eqref{eq:eta-triple} becomes
    \(\eta_{xy}-\eta_{xz}-\eta_{yz}+r\ne 0.\)
    If \(r=0\), this implies that every triangle is monochromatic, so all edges have a common value \(c\in\{1,2\}\), and then \(c(m_0-1)=0\), giving \(m_0\equiv 1\). If \(r=1\), the allowed triangle patterns on the \(\beta=0\) vertices are exactly \((2,2,2)\) and \((1,1,2)\). Equivalently, the edges of value \(1\) form a cut. If this cut is nontrivial, say its parts have sizes $a$ and $m_0-a$ with $1\le a\le m_0-1$, then the \(\eta\)-degree of a vertex in the two parts gives $(m_0-a)+2(a-1)=0$ and $a+2(m_0-a-1)=0$, which are impossible in $\Z_3$. Hence all edges have value \(2\), and \(2(m_0-1)=0\), so \(m_0\equiv 1\). The case \(r=2\) is symmetric.

    Now consider the three possible values of \(r\).
    If \(r=0\), then three vertices with \(\beta=1\) would violate \eqref{eq:eta-triple}; hence \(m_1\le 2\). Similarly, \(m_2\le 2\). Together with \eqref{eq:mi-basic}, this leaves only
    \((m_1,m_2)=(0,0),\ (1,1),\ \text{or }(2,2).\)
    The case \((m_1,m_2)=(1,1)\) is impossible by \eqref{eq:eta-degree}, since the unique edge between the \(\beta=1\) vertex and the \(\beta=2\) vertex has nonzero \(\eta\)-value. If \((m_1,m_2)=(0,0)\), then \(m_0\equiv 2\), contradicting $m_0\equiv 1$. Thus \((m_1,m_2)=(2,2)\).
    Take two vertices \(u,v\) with \(\beta=1\) and one vertex \(z\) with \(\beta=2\). Applying \eqref{eq:eta-triple} with each of the three possible choices of the distinguished pair implies
    \(\eta_{uz}=\eta_{vz}=1.\)
    On the other hand, taking one vertex with \(\beta=1\) and two vertices with \(\beta=2\) implies that the corresponding \((1,2)\)-edges have \(\eta\)-value \(2\). This is impossible. Hence \(r\ne 0\).

    If \(r=1\), then one vertex with \(\beta=1\) and two vertices with \(\beta=2\) cannot occur. Indeed, if the two \((1,2)\)-edges have \(\eta\)-values $a,b\in\{1,2\}$ and the \((2,2)\)-edge has \(\eta=0\), then the three inequalities obtained from \eqref{eq:eta-triple} give $-a-b\ne0$, $a-b\ne0$ and $b-a\ne0$, which are impossible in $\Z_3$. Hence, if \(m_1>0\), then \(m_2\le 1\). If \(m_2=1\), then each vertex with \(\beta=1\) has exactly one possible nonzero incident \(\eta\)-edge, namely the edge to the unique \(\beta=2\) vertex, contradicting the zero-degree condition \eqref{eq:eta-degree}. Thus \(m_2=0\) whenever \(m_1>0\). Then, regardless of whether $m_1$ is equal to 0, \eqref{eq:mi-basic} gives \(m_1\equiv 0\), and hence \(m_0\equiv 2\), contradicting \(m_0\equiv 1\).
    The case \(r=2\) is symmetric to \(r=1\), obtained by multiplying all elements of \(\Z_3\) by \(2\).
    This final contradiction proves the lemma.
\end{proof}

We now return to the graph. The data \((R;r,b,\delta)\) constructed above satisfy all hypotheses of Lemma~\ref{lem:k23-structure}, which is impossible. Therefore \eqref{eq:k23-no-p} is false. Hence there exist distinct vertices \(x,y,z\in V\) such that $p(x,y;z)=0$.
By the definition of \(p(x,y;z)\), this gives a zero-sum copy of \(K_{2,3k}\) with two-vertex side \(\{x,y\}\) and \(3k\)-vertex side \(V\setminus\{x,y,z\}\). Thus
\(R(K_{2,3k},\Z_3)\le 3k+3.\)
Together with the lower bound, this proves
\[R(K_{2,3k},\Z_3)=3k+3,\]
which completes the proof of Lemma~\ref{prop:two}.

\subsection{Proof of Lemma~\ref{prop:three-six}}\label{subsec:prop36}

We first prove a counting lemma which will also be used in the next subsection.

\begin{lemma}\label{lem:cut-count}
    Let $K$ be a complete graph on vertex set $V$, let $w\colon E(K)\to\Z_3$ be an edge-labeling, and let $B,Q\subseteq V$ be disjoint.  Let $L=V\setminus(B\cup Q)$.  If $|L|\ge7$, then for every $\rho\in\Z_3$,
    \begin{equation}\label{eq:cut-count}
        \sum_{\substack{T\subseteq L,\ |T|\equiv\rho,\\ \partial(B\cup T)=0}}(-1)^{|T|}=0\ \text{in}\ \Z_3.
    \end{equation}
\end{lemma}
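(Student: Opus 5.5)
The plan is a Chevalley--Warning style polynomial argument over $\mathbb F_3=\Z_3$. Encode a subset $T\subseteq L$ by its indicator vector $x=(x_v)_{v\in L}\in\{0,1\}^L$. The conditions defining the sum in \eqref{eq:cut-count} become low-degree polynomial conditions on $x$. The sign $(-1)^{|T|}$ becomes a product of linear factors. A degree count then shows that every monomial contributes a multiple of $3$.

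First, express $\partial(B\cup T)$ as a polynomial in $x$. Every vertex of $V$ lies in exactly one of $B$, $Q$, $L$, so
\[
  \partial(B\cup T)=P(x),
\]
where $P$ is a polynomial over $\Z_3$ of degree at most $2$. Explicitly, its quadratic part comes from the pairs $u,v\in L$, each edge $uv$ crossing the cut exactly when $x_u+x_v-2x_ux_v=1$. Its linear and constant parts come from the edges between $L$ and $B\cup Q$ and the fixed edges between $B$ and $Q$. By Fermat's little theorem in $\Z_3$, the indicator of $P(x)=0$ is $1-P(x)^2$, which has degree at most $4$. Likewise, the indicator of $|T|\equiv\rho$ is $1-\bigl(\sum_{v}x_v-\rho\bigr)^2$, which has degree at most $2$. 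Finally, $(-1)^{|T|}=\prod_{v\in L}(1-2x_v)=\prod_{v\in L}(1+x_v)$ in $\Z_3$, since $-2=1$.

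The left side of \eqref{eq:cut-count} is therefore
\[
  \sum_{x\in\{0,1\}^L}F(x)\prod_{v\in L}(1+x_v),
\]
where $F=(1-P^2)\bigl(1-(\sum x_v-\rho)^2\bigr)$ has degree at most $6$. Since $x_v^2=x_v$ on $\{0,1\}$, we may replace $F$ by its multilinear reduction, which is a combination of monomials $x^S=\prod_{v\in S}x_v$ with $|S|\le6$. For each such monomial the sum factorizes:
\[
  \sum_{x\in\{0,1\}^L}x^S\prod_{v\in L}(1+x_v)=\prod_{v\in S}\Bigl(\sum_{t\in\{0,1\}}t(1+t)\Bigr)\prod_{v\in L\setminus S}\Bigl(\sum_{t\in\{0,1\}}(1+t)\Bigr)=2^{|S|}\,3^{|L|-|S|}.
\]
Because $|S|\le6<7\le|L|$, there is at least one factor $3$, so every term vanishes in $\Z_3$, and the identity follows.

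No step looks like a genuine obstacle. The only care needed is the degree bookkeeping: the cut sum must be seen to be at most quadratic in $x$, so that $\deg F\le 6$ and the hypothesis $|L|\ge7$ is exactly what forces $|L\setminus S|\ge1$. One should also check that reducing $F$ to multilinear form does not raise its degree, which is clear since the reduction only lowers exponents.
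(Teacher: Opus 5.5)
Your proposal is correct and follows essentially the same route as the paper: encode $T$ by a $0$-$1$ vector, write the indicator of the two conditions as $(1-F_1^2)(1-F_2^2)$ with degree at most $6$, and use $|L|\ge7$ to show that every monomial, weighted by the sign $(-1)^{|T|}$, sums to zero. Writing the sign as $\prod_{v\in L}(1+x_v)$ and factorizing is just an explicit version of the paper's step of summing out an omitted variable $x_j$ to get $1-1=0$.
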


\begin{proof}
    Write $L=\{v_1,\ldots,v_\ell\}$, and encode a subset $T\subseteq L$ by a vector $x\in\{0,1\}^\ell$, where $x_i=1$ means $v_i\in T$.  The condition $|T|\equiv\rho$ is represented by the linear equation
    \[
        F_1(x)=x_1+\cdots+x_\ell-\rho=0.
    \]
    For \(u\in V\), let $\chi_u(x)$ be $1$ if $u\in B$, be $0$ if $u\in Q$, and be $x_i$ if $u=v_i\in L$.  Thus, on \(0\)-\(1\) vectors, \(\chi_u(x)\) is the indicator of the event \(u\in B\cup T\). Define
    \[
        F_2(x)=\sum_{\{u,v\}\subseteq V}w(uv)\bigl(\chi_u(x)+\chi_v(x)-2\chi_u(x)\chi_v(x)\bigr).
    \]
    For a \(0\)-\(1\) vector \(x\), the factor
    \(\chi_u(x)+\chi_v(x)-2\chi_u(x)\chi_v(x)\)
    is \(1\) exactly when precisely one of \(u,v\) is contained in \(B\cup T\), and is \(0\) otherwise. Hence
    \(F_2(x)=\partial(B\cup T)\)
    on \(0\)-\(1\) vectors. Moreover, \(\deg F_1\le 1\) and \(\deg F_2\le 2\).

    For \(a\in \Z_3\), the element \(1-a^2\) is equal to \(1\) if \(a=0\), and is equal to \(0\) otherwise. Hence
    \[
        \sum_{\substack{x\in\{0,1\}^\ell,\\ F_1(x)=F_2(x)=0}}
        (-1)^{x_1+\cdots+x_\ell}
        =
        \sum_{x\in\{0,1\}^\ell}
        (-1)^{x_1+\cdots+x_\ell}
        \bigl(1-F_1(x)^2\bigr)\bigl(1-F_2(x)^2\bigr).
    \]
    The polynomial
    \(\bigl(1-F_1^2\bigr)\bigl(1-F_2^2\bigr)\)
    has degree at most \(2\deg F_1+2\deg F_2\le 2(1+2)=6<\ell\). It therefore suffices to check the signed sum of a monomial of degree less than \(\ell\). Such a monomial omits at least one variable, say \(x_j\). Summing first over \(x_j\in\{0,1\}\) gives
    \(1-1=0.\)
    Thus every monomial contributes \(0\). Therefore, we have
    \[
        \sum_{\substack{x\in\{0,1\}^\ell,\\ F_1(x)=F_2(x)=0}}
        (-1)^{x_1+\cdots+x_\ell}=0.
    \]
    Translating back from \(x\) to \(T\subseteq L\), this is exactly
    \(
    \sum_{\substack{T\subseteq L,\ |T|\equiv \rho,\\ \partial(B\cup T)=0}}
    (-1)^{|T|}=0,
    \)
    proving \eqref{eq:cut-count}.
\end{proof}

The lower bound $R(K_{3,6},\Z_3)\ge9$ is the vertex-count lower bound.  For the upper bound, let $K$ be a complete graph on vertex set $V$ with $|V|=9$, and let $w\colon E(K)\to\Z_3$ be an arbitrary edge-labeling. Suppose that there is no zero-sum copy of $K_{3,6}$.  Since every copy of \(K_{3,6}\) in \(K\) is spanning, this assumption is equivalent to
\begin{equation}\label{eq:no-zero-3cut}
    \partial(A)\ne0\text{ for every }A\subseteq V\text{ with }|A|=3.
\end{equation}
Fix a vertex $v\in V$, and apply Lemma~\ref{lem:cut-count} with $B=\{v\}$, $Q=\emptyset$, $L=V\setminus\{v\}$, and $\rho=2$.  Since \(|L|=8\), every subset \(T\subseteq L\) satisfying \(|T|\equiv 2\pmod 3\) has size \(2\), \(5\), or \(8\).  If $|T|=2$, then $B\cup T$ is a three-set with zero cut sum, contradicting \eqref{eq:no-zero-3cut}.  If $|T|=5$, then $B\cup T$ has size $6$, and its complement has size $3$ and the same cut value, again contradicting \eqref{eq:no-zero-3cut}.  Thus the only possible solution is $T=L$.  This is indeed a solution, because $B\cup L=V$ and $\partial(V)=0$, and it contributes $(-1)^8=1$ to the signed sum.  This contradicts Lemma~\ref{lem:cut-count}, whose signed sum must be $0$.  Therefore, \(K\) contains a zero-sum copy of \(K_{3,6}\), and hence
\(R(K_{3,6},\Z_3)\le 9.\)
Combining the upper and lower bounds yields
\(R(K_{3,6},\Z_3)=9.\)

\subsection{Proof of Lemma~\ref{prop:four-six}}\label{subsec:prop46}

The lower bound $R(K_{4,6},\Z_3)\ge10$ is the vertex-count lower bound.  For the upper bound, let $K$ be a complete graph on vertex set $V$ with $|V|=10$, and let $w\colon E(K)\to\Z_3$ be an arbitrary edge-labeling. Suppose, for contradiction, that
\begin{equation}\label{eq:k10-no-fourcut}
    \partial(A)\ne0\text{ for every }A\subseteq V\text{ with }|A|=4.
\end{equation}
Thus $\partial(A)^2=1$ for every four-set $A$.

For $v\in V$, recall that $\td(v)=\partial(\{v\})=\sum_{u\ne v}w(uv)$.  We first establish the following identity
\begin{equation}\label{eq:k10-moment}
    \sum_{|A|=4}\partial(A)^2=2\sum_{v\in V}\td(v)^2.
\end{equation}
Expanding the left-hand side in the edge variables, a fixed edge crosses exactly
\(2\binom{8}{3}=112\equiv 1\)
four-cuts, so the coefficient of its square is \(1\). Two adjacent edges cross the same four-cut in
\(\binom{7}{3}+\binom{7}{2}=56\equiv 2\)
ways. Since the cross term in a square carries an additional factor of \(2\), the coefficient of their product is
\(2\cdot 56\equiv 1.\)
Two disjoint edges cross the same four-cut in
\(4\binom{6}{2}=60\equiv 0\)
ways, so their product has coefficient \(0\).
On the right-hand side of \eqref{eq:k10-moment}, the square of a fixed edge occurs in the degree squares of its two endpoints, and hence has coefficient
\(2\cdot 2\equiv 1.\)
The product of two adjacent edges occurs as a cross term in the degree square of their common endpoint, and thus also has coefficient
\(2\cdot 2\equiv 1.\)
Products of disjoint edges do not occur. The coefficients on the two sides therefore agree, proving \eqref{eq:k10-moment}.

By \eqref{eq:k10-no-fourcut},
\(
\sum_{|A|=4}\partial(A)^2=\binom{10}{4}=210=0.
\)
Equation~\eqref{eq:k10-moment} therefore gives $\sum_v \td(v)^2=0$.  For \(x\in\Z_3\), we have \(x^2=0\) if \(x=0\) and \(x^2=1\) otherwise. Consequently, the number of vertices of nonzero weighted degree is divisible by \(3\). Hence the number of vertices of weighted degree zero is congruent to
\(10\equiv 1\pmod 3.\)
In particular, at least one vertex has weighted degree zero. Fix such a vertex, call it \(q\), and set
\(R=V\setminus\{q\}.\)

We next show that every pair of vertices in $R$ is contained in a three-set with zero cut sum.  Fix a pair $P=\{x,y\}\subseteq R$. Apply Lemma~\ref{lem:cut-count} with $B=\{q\}$, $Q=P$, $L=V\setminus(B\cup Q)$, and $\rho=0$.  Here $|L|=7$, so the possible solution sizes are $0$, $3$, and $6$.  The empty set is a solution because $\partial(\{q\})=\td(q)=0$.  A solution with $|T|=3$ would make $B \cup T$ a four-set with zero cut sum, contradicting \eqref{eq:k10-no-fourcut}.  A subset $T$ of size $6$ has the form $T=L\setminus\{z\}$ for some $z\in R\setminus P$. In this case, $B\cup T=V\setminus(P\cup\{z\})$, and hence, since complementary sets determine the same cut, $\partial(P\cup\{z\})=0$.  Define
\[
    \lambda_P=\bigl|\{z\in R\setminus P:\partial(P\cup\{z\})=0\}\bigr|,
\]
The only solutions contributing to the signed sum in Lemma~\ref{lem:cut-count} are therefore the empty set and the \(\lambda_P\) solutions of size \(6\). Since both sizes are even, the lemma gives $1+\lambda_P=0$.  Hence
\begin{equation}\label{eq:k10-pair-extension}
    \lambda_P\equiv2\pmod3,
\end{equation}
and in particular $\lambda_P>0$.  Thus every pair in $R$ is contained in at least one three-set with zero cut sum.

Let
\(
Z=\{v\in V:\td(v)=0\}.
\)
As observed above,
\begin{equation}\label{eq:k10-zero-degree-count}
    |Z|\equiv1\pmod3.
\end{equation}
We now show that every zero three-cut contained in $R$ contains exactly two vertices of $Z$.  Let $B_0\subseteq R$ satisfy $|B_0|=3$ and $\partial(B_0)=0$.  Apply Lemma~\ref{lem:cut-count} with $B=B_0$, $Q=\emptyset$, $L=V\setminus B_0$, and $\rho=0$.  Again, \(|L|=7\), so the possible sizes of \(T\) are \(0\), \(3\), and \(6\).  The empty set is a solution because \(\partial(B_0)=0\).  A solution with $|T|=3$ would make $B_0\cup T$ a six-set with zero cut sum; its complement would then be a four-set with zero cut sum, contradicting \eqref{eq:k10-no-fourcut}.
Every six-set \(T\subseteq L\) has the form
\(T=L\setminus\{v\}\)
for some \(v\in V\setminus B_0\).
Then
\(B_0\cup T=V\setminus\{v\},\)
and therefore
\(\partial(B_0\cup T)=\partial(\{v\})=\td(v).\)
Thus the solutions of size \(6\) correspond exactly to the vertices in \(Z\setminus B_0\). Since both \(0\) and \(6\) are even, Lemma~\ref{lem:cut-count} gives $1+|Z\setminus B_0|=0$.
and hence $|Z\setminus B_0|\equiv2\pmod3$.  Together with \eqref{eq:k10-zero-degree-count}, this yields
\[
    |Z\cap B_0|=|Z|-|Z\setminus B_0|\equiv1-2\equiv2\pmod3.
\]
Since $|B_0|=3$, we conclude that
\begin{equation}\label{eq:k10-threecut-two-zero}
    |Z\cap B_0|=2.
\end{equation}

We can now derive the final contradiction. Suppose that \(R\) contains two vertices of nonzero weighted degree, then by \eqref{eq:k10-pair-extension} this pair is contained in a zero three-cut $B_0\subseteq R$, but such a $B_0$ would contain at most one zero-degree vertex, contradicting \eqref{eq:k10-threecut-two-zero}.  Hence $R$ contains at most one nonzero-degree vertex.
Since $q\in Z$, the whole graph contains at most one nonzero-degree vertex. Note that the number of nonzero-degree vertices is divisible by $3$, so in fact every vertex has degree zero, i.e. $Z=V$.
On the other hand, \eqref{eq:k10-pair-extension} guarantees the existence of a zero three-cut \(B_0\subseteq R\). Since \(Z=V\), this set satisfies
\(|Z\cap B_0|=3,\)
contradicting \eqref{eq:k10-threecut-two-zero}.
This contradiction shows that \eqref{eq:k10-no-fourcut} is impossible. Therefore every \(\Z_3\)-edge-labeling of \(K_{10}\) contains a zero-sum copy of \(K_{4,6}\). Hence
\(R(K_{4,6},\Z_3)=10.\)

\section*{Acknowledgements}

The authors would like to thank Professor Yair Caro for pointing out the elementary Erd\H{o}s--Ginzburg--Ziv upper bound and for helpful comments on related bipartite zero-sum Ramsey problems.


\begin{thebibliography}{99}

    \bibitem{AlonCaro1993}
    N. Alon and Y. Caro,
    \newblock On three zero-sum Ramsey-type problems,
    \newblock \emph{Journal of Graph Theory} 17 (1993), no. 2, 177--192.

    \bibitem{AlvaradoColucciParente2025}
    J. D. Alvarado, L. Colucci and R. Parente,
    \newblock On a problem of Caro on the \(\mathbb Z_3\)-Ramsey number of forests,
    \newblock arXiv:2503.01032, 2025.

    \bibitem{BD1990}
    A. Bialostocki and P. Dierker,
    \newblock Zero sum Ramsey theorems,
    \newblock \emph{Congressus Numerantium} \textbf{70} (1990), 119--130.

    \bibitem{BD1992}
    A. Bialostocki and P. Dierker,
    \newblock On the Erd\H{o}s--Ginzburg--Ziv theorem and the Ramsey numbers for stars and matchings,
    \newblock \emph{Discrete Mathematics} \textbf{110} (1992), 1--8.

    \bibitem{Caropersonal} 
    Y. Caro,
    \newblock Personal communications, July, 02, 2026.

    \bibitem{Caro1992Complete}
    Y. Caro,
    \newblock On zero-sum Ramsey numbers --- complete graphs,
    \newblock \emph{Quarterly Journal of Mathematics} \textbf{43} (1992),
    175--181.

    \bibitem{Caro1992Stars}
    Y. Caro,
    \newblock On zero-sum Ramsey numbers --- stars,
    \newblock \emph{Discrete Mathematics} \textbf{104} (1992), no. 1, 1--6.

        \bibitem{Caro1993Bipartite}
        Y. Caro,
        \newblock Zero-sum bipartite Ramsey numbers,
        \newblock \emph{Czechoslovak Mathematical Journal} 43 (1993), no. 1, 107--114.

    \bibitem{Caro1994}
    Y. Caro,
    \newblock A complete characterization of the zero-sum (mod 2) Ramsey numbers,
    \newblock \emph{Journal of Combinatorial Theory, Series A} \textbf{68} (1994), 205--211.

    \bibitem{Caro1996}
    Y. Caro,
    \newblock Zero-sum problems --- A survey,
    \newblock \emph{Discrete Mathematics} \textbf{152} (1996), 93--113.

    \bibitem{Caro1997}
    Y. Caro,
    \newblock Binomial coefficients and zero-sum Ramsey numbers,
    \newblock \emph{Journal of Combinatorial Theory, Series A} \textbf{80}
    (1997), 367--373.

    \bibitem{CaroMifsud2025}
    Y. Caro and X. Mifsud,
    \newblock On zero-sum Ramsey numbers modulo \(3\),
    \newblock arXiv:2502.03864, 2025.

    \bibitem{CaroRoditty1995}
    Y. Caro and Y. Roditty,
    \newblock A zero-sum conjecture for trees,
    \newblock \emph{Ars Combinatoria} \textbf{40} (1995), 89--96.

    \bibitem{Cauchy1813}
    A. Cauchy,
    \newblock Recherches sur les nombres,
    \newblock \emph{J. Ecole Polytech} \textbf{9} (1813), 99--116.

    \bibitem{ChiHeCyclesWheels2026}
    C. Chi and J. He,
    \newblock On zero-sum Ramsey numbers of cycles and wheels,
    \newblock arXiv:2605.14954, 2026.

    \bibitem{ChungGraham1983}
    F. R. K. Chung and R. L. Graham,
    \newblock Edge-colored complete graphs with precisely colored subgraphs,
    \newblock \emph{Combinatorica} 3 (1983), 315--324.

    \bibitem{ColucciDEmidio2025}
    L. Colucci and M. D'Emidio,
    \newblock A linear upper bound on the zero-sum Ramsey number of forests in \(\mathbb Z_p\),
    \newblock arXiv:2512.06229, 2025.

    \bibitem{Davenport1935}
    H. Davenport,
    \newblock On the addition of residue classes,
    \newblock \emph{Journal of the London Mathematical Society} \textbf{10} (1935), 30--32.

    \bibitem{EGZ1961}
    P. Erd\H{o}s, A. Ginzburg, and A. Ziv,
    \newblock Theorem in the additive number theory,
    \newblock \emph{Bulletin of the Research Council of Israel, Section F: Mathematics and Physics} \textbf{10F} (1961), 41--43.

    \bibitem{HarborthPiepmeyer1994}
    H. Harborth and L. Piepmeyer,
    \newblock The zero-sum Ramsey numbers \(r(K_4,\mathbb Z_3)\) and
    \(r(K_6,\mathbb Z_3)\),
    \newblock \emph{Congressus Numerantium} \textbf{101} (1994), 51--54.

    \bibitem{HarborthPiepmeyer1996}
    H. Harborth and L. Piepmeyer,
    \newblock Zero-sum Ramsey numbers modulo \(3\),
    \newblock \emph{Journal of Combinatorial Theory, Series A} \textbf{75}
    (1996), 145--147.

    \bibitem{HeathSimmons2026}
    E. Heath and A. Simmons,
    \newblock A linear upper bound on the \(\mathbb Z_p\)-Ramsey number of graphs with sufficiently large \(2\)-packing,
    \newblock arXiv:2605.21817, 2026.

    \bibitem{KatzLianMalekshahianShapiro2025}
    J. Katz, X. Lian, A. Malekshahian, and A. Shapiro,
    \newblock A linear upper bound for zero-sum Ramsey numbers of bounded degree graphs,
    \newblock arXiv:2512.17790, 2025.

    \bibitem{LetzterMorrison2024}
    S. Letzter and N. Morrison,
    \newblock Directed cycles with zero weight in \(\mathbb Z_p^k\),
    \newblock \emph{Journal of Combinatorial Theory, Series B} \textbf{168}
    (2024), 192--207.

    \bibitem{Ramsey1930}
    F. P. Ramsey,
    \newblock On a problem of formal logic,
    \newblock Proceedings of the London Mathematical Society, Series 2, \textbf{30} (1930), 264--286.

    \bibitem{Shapiro2026}
    A. Shapiro,
    \newblock A linear upper bound on zero-sum Ramsey numbers of \(d\)-degenerate graphs in \(\mathbb Z_p\),
    \newblock arXiv:2604.10864, 2026.

\end{thebibliography}
\end{document}